\documentclass[11pt,leqno]{article}
\usepackage{amsthm,amsfonts,amssymb,amsmath,oldgerm}
\usepackage{epsfig}
\numberwithin{equation}{section}
\usepackage[thinlines]{easybmat}


\setlength{\evensidemargin}{0in} \setlength{\oddsidemargin}{0in}
\setlength{\textwidth}{6in} \setlength{\topmargin}{0in}
\setlength{\textheight}{8in}


\newcommand{\bu}{\bar{u}}

\newcommand{\txi}{\tilde{\xi}}
\newcommand{\x}{{\xi}_1} 

\newcommand{\td}{\tilde}

\newcommand{\hv}{\hat{v}}
\newcommand{\curl}{\hbox{ \rm curl }}


\def\txi{{\tilde \xi}}

\def\eps{\varepsilon }


\newcommand\R{\mathbb R}

\def\eps{\varepsilon}

\newcommand\kernel{\hbox{\rm Ker}}

\newcommand\br{\begin{remark}}
\newcommand\er{\end{remark}}
\newcommand\bp{\begin{pmatrix}}
\newcommand\ep{\end{pmatrix}}
\newcommand\be{\begin{equation}}
\newcommand\ee{\end{equation}}
\newcommand\ba{\begin{equation}\begin{aligned}}
\newcommand\ea{\end{aligned}\end{equation}}


\newcommand{\bap}{\begin{app}}
\newcommand{\eap}{\end{app}}
\newcommand{\begs}{\begin{exams}}
\newcommand{\eegs}{\end{exams}}
\newcommand{\beg}{\begin{example}}
\newcommand{\eeg}{\end{exaplem}}
\newcommand{\bpr}{\begin{proposition}}
\newcommand{\epr}{\end{proposition}}
\newcommand{\bt}{\begin{theorem}}
\newcommand{\et}{\end{theorem}}
\newcommand{\bc}{\begin{corollary}}
\newcommand{\ec}{\end{corollary}}
\newcommand{\bl}{\begin{lemma}}
\newcommand{\el}{\end{lemma}}
\newcommand{\bd}{\begin{definition}}
\newcommand{\ed}{\end{definition}}
\newcommand{\brs}{\begin{remarks}}
\newcommand{\ers}{\end{remarks}}

\newtheorem{theo}{Theorem}[section]
\newtheorem{prop}[theo]{Proposition}
\newtheorem{cor}[theo]{Corollary}
\newtheorem{lem}[theo]{Lemma}

\newtheorem{exams}[theo]{Examples}

\numberwithin{equation}{section}


\newcommand{\RR}{{\mathbb R}}

\newcommand{\const}{\text{\rm constant}}

\newtheorem{theorem}{Theorem}[section]
\newtheorem{proposition}[theorem]{Proposition}
\newtheorem{corollary}[theorem]{Corollary}
\newtheorem{lemma}[theorem]{Lemma}
\newtheorem{definition}[theorem]{Definition}

\newtheorem{example}[theorem]{Example}
\newtheorem{remark}[theorem]{Remark}




\pagestyle{headings}

\newcommand{\RM}{\mathbb{R}}



\title{
Nonlinear stability of periodic traveling wave solutions of
viscous conservation laws in
dimensions one and two}


\author{\sc \small
Mathew A. Johnson\thanks{Indiana University, Bloomington, IN 47405;
matjohn@indiana.edu: Research of M.J. was partially supported by an NSF Postdoctoral Fellowship under NSF grant DMS-0902192.}
~~~~~
Kevin Zumbrun\thanks{Indiana University, Bloomington, IN 47405;
kzumbrun@indiana.edu:
Research of K.Z. was partially supported
under NSF grants no. DMS-0300487 and DMS-0801745.
 }}
\begin{document}

\maketitle


\begin{center}
{\bf Keywords}: Periodic traveling waves; Bloch decomposition;
modulated waves.
\end{center}

\begin{center}
{\bf 2000 MR Subject Classification}: 35B35.
\end{center}


\begin{abstract}
Extending results of Oh and Zumbrun in dimensions $d\ge 3$,
we establish nonlinear stability and asymptotic behavior
of spatially-periodic traveling-wave solutions of viscous systems of
conservation laws in critical
dimensions $d=1,2$, under a natural set of spectral stability assumptions
introduced by Schneider in the setting of reaction diffusion equations.
The key new steps in the analysis beyond that in dimensions $d\ge 3$
are a refined Green function estimate separating off translation as
the slowest decaying linear mode and a novel scheme for detecting
cancellation at the level of the nonlinear iteration
in the Duhamel representation of a modulated periodic wave.
\end{abstract}



\bigbreak

\section{Introduction }\label{intro}

Nonclassical viscous conservation laws
arising in multiphase fluid and solid mechanics
exhibit a rich variety of traveling wave phenomena,
including homoclinic (pulse-type) and periodic solutions
along with the standard heteroclinic (shock, or front-type)
solutions \cite{GZ,Z6,OZ1,OZ2}.
Here, we investigate stability of periodic traveling waves:
specifically, sufficient conditions for stability of the wave.
Our main result, generalizing results of Oh and Zumbrun \cite{OZ4}
in dimensions $d\ge 3$, is to show that
{\it strong spectral stability
in the sense of Schneider \cite{S1,S2,S3}
implies linearized and nonlinear
$L^1 \cap H^K\to L^\infty$ bounded stability}, for all
dimensions $d\ge 1$, and
{\it asymptotic stability} for dimensions $d\ge 2$.

More precisely, we show that small $L^1 \cap H^s$
perturbations of a planar periodic solution $u(x,t)\equiv \bar u(x_1)$
(without loss of generality taken stationary)
converge at Gaussian rate in $L^p$, $p\ge 2$ to a modulation
\be\label{mod}
\bar u(x_1-\psi(x,t)),
\ee
of the unperturbed wave,
where $x=(x_1,\tilde x)$, $\tilde x=(x_2, \dots, x_d)$, and
$\psi$ is a scalar function whose $x$- and $t$-gradients likewise
decay at least at Gaussian rate in all $L^p$, $p\ge 2$,
but which itself decays more slowly by a factor $t^{1/2}$;
in particular, $\psi$ is merely bounded in $L^\infty$ for dimension $d=1$.

The one-dimensional study of spectral stability of spatially periodic
traveling waves of systems of viscous conservation laws
was initiated by Oh and Zumbrun \cite{OZ1}
in the ``quasi-Hamiltonian'' case that the traveling-wave equation
possesses an integral of motion, and in the general case
by Serre \cite{Se1}.
An important contribution of Serre was to point out a
larger connection between the linearized dispersion relation
(the function $\lambda(\xi)$ relating spectra to wave number of the linearized
operator about the wave) near zero
and the formal Whitham averaged system obtained by
slow modulation, or WKB, approximation.

In \cite {OZ3}, this was extended to multi-dimensions,
relating the linearized dispersion relation near zero to
\ba \label{e:wkb}
\partial_t M + \sum_j \partial_{x_j}F^j &=0,   \\
\partial_t (\Omega N) + \nabla_x  (\Omega S)&=0,
\ea
where $M\in \RR^n$ denotes the average over one period,
$F^j$ the average of an associated flux,
$\Omega=|\nabla_x \Psi|\in \RR^1$ the frequency, $S=-\Psi_t/|\nabla_x \Psi|\in \RR^1$ the speed
$s$, and $N=\nabla_x \Psi/|\nabla_x \Psi|\in \RR^d$ the normal $\nu$ associated
with nearby periodic waves,
with an additional constraint
\begin{equation}\label{con}
\curl (\Omega N)=\curl \nabla_x \Psi \equiv 0.
\end{equation}
As an immediate corollary, similarly as in \cite{OZ1}, \cite{Se1} in the
one-dimensional case, this yielded as a necessary condition for
multi-dimensional stability hyperbolicity of the
averaged system \eqref{e:wkb}--\eqref{con}.

The present study is informed by but does not directly rely on
this observation relating Whitham averaging and spectral
stability properties.
Likewise, the Evans function techniques used in \cite{Se1,OZ3}
to establish this connection play no role in our analysis;
indeed, the Evans function makes no appearance here.
Rather, we rely on a direct Bloch-decomposition argument
in the spirit of Schneider \cite{S1,S2,S3}, combining sharp
linearized estimates with subtle cancellation in nonlinear
source terms arising from the modulated wave approximation.
The analytical techniques used to realize this program
are somewhat different from those of \cite{S1,S2,S3}, however,
coming instead from the theory of stability of
viscous shock fronts through a line of investigation carried
out in \cite{OZ1,OZ2,OZ3,OZ4,HoZ}.
In particular, the nonsmooth dispersion relation at $\xi=0$ typical
for convection-diffusion equations
requires
different treatment from that of \cite{S1,S2,S3}
in the reaction diffusion case; see Remark \ref{nonsmooth}.
Moreover, we detect nonlinear cancellation in the physical $x$-$t$
domain rather than the frequency domain as in \cite{S1,S2,S3}.
The main difference between the present analysis and that
of \cite{OZ4} is the
systematic incorporation of modulation approximation \eqref{mod}.

\subsection{Equations and assumptions}\label{s:equations}
Consider a parabolic system of conservation laws
\be
u_t + \sum_j f^j(u)_{x_j} = \Delta_x u,
\label{eqn:1conslaw}
\ee
$u \in {\cal U} (\hbox{open}) \in \R^n$,  $f^j \in \R^n$,
$x \in \R^d$, $d\ge 1$, $t \in \R^+$,
and a periodic traveling wave solution
\be
u=\bar{u}(x\cdot \nu -st),
\ee
of period $X$, satisfying the traveling-wave
ODE
$
\bar u''=
( \sum_j \nu_j f^j(\bar u) )'-s\bar u'
$
with boundary conditions $ \bar u(0) = \bar u(X)=:u_0.	$
Integrating, we obtain a first-order profile equation
\be
\bar u'= \sum_j \nu_j f^j(\bar u) -s \bar u -q,
\label{e:profile}
\ee
where $(u_0,q,s,\nu,X)\equiv \const$.
Without loss of generality take $\nu=e_1$, $s=0$,
so that $\bar u=\bar{u}(x_1)$ represents a stationary solution
depending only on $x_1$.

Following \cite{Se1,OZ3,OZ4}, we assume:

(H1) $f^j\in C^{K+1}$,
$K\ge [d/2]+4$.

(H2) The map $H: \,
\R \times {\cal U} \times \R \times S^{d-1} \times \R^n  \rightarrow \R^n$	
taking
$(X; a, s, \nu, q)  \mapsto u(X; a, s, \nu, q)-a$
is a submersion at point $(\bar{X}; \bar{u}(0), 0, e_1, \bar{q})$,
where $u(\cdot;\cdot)$ is the solution operator for \eqref{e:profile}.

Conditions (H1)--(H2) imply that the set of periodic solutions
in the vicinity of $\bar u$ form a
smooth $(n+d+1)$-dimensional manifold $\{\bar u^a(x\cdot \nu(a)-\alpha-s(a)t)\}$,
with $\alpha\in \RR$, $a\in \RR^{n+d}$.

\subsubsection{Linearized equations}\label{evans}

Linearizing (\ref{eqn:1conslaw}) about
$\bar{u}(\cdot)$, we obtain
\be
v_t = Lv := \Delta_x v -\sum(A^j v)_{x_j}, \label{e:lin}
\ee
where coefficients
$A^j:= Df^j(\bu)$
are now periodic functions of $x_1$.
Taking the Fourier transform in the transverse coordinate $\td{x}=
(x_2, \cdots, x_d)$, we obtain
\ba
\hv_t = L_{\td{\xi}}\hv
& = \hv_{x_1,x_1}
-(A^1 \hv)_{x_1}
 - i \sum_{j\ne 1}A^j \xi_j \hv
	- \sum_{j\ne 1} \xi_j^2  \hv,
\label{e:fourier}
\ea
where $\td{\xi}=(\xi_2, \cdots, \xi_d)$ is the transverse frequency
vector.

\subsubsection{Bloch--Fourier decomposition
and stability conditions}\label{bloch}

Following \cite{G,S1,S2,S3}, we define the family of operators
\be
L_{\xi} = e^{-i \xi_1 x_1} L_{\txi}  e^{i \xi_1 x_1}
\label{e:part}
\ee
operating on the class of $L^2$ periodic functions on $[0,X]$;
the $(L^2)$ spectrum
of $L_{\txi}$ is equal to the union of the
spectra of all $L_{\xi}$ with $\xi_1$ real with associated
eigenfunctions
\be
w(x_1, \txi,\lambda) := e^{i \xi_1 x_1} q(x_1, \x, \txi, \lambda),
\label{e:efunction}
\ee
where $q$, periodic, is an eigenfunction of $L_{\xi}$.
By continuity of spectrum,
and discreteness of the spectrum of the elliptic operators $L_\xi$ on
the compact domain $[0,X]$,
we have that the spectra of $L_{\xi}$
may be described as the union of countably many continuous
surfaces $\lambda_j(\xi)$.

Without loss of generality taking $X=1$,
recall now the {\it Bloch--Fourier representation}
\be\label{Bloch}
u(x)=
\Big(\frac{1}{2\pi }\Big)^d \int_{-\pi}^{\pi}\int_{\R^{d-1}}
e^{i\xi\cdot x}\hat u(\xi, x_1)
d\xi_1\, d\tilde \xi
\ee
of an $L^2$ function $u$, where
$\hat u(\xi, x_1):=\sum_k e^{2\pi ikx_1}\hat u(\xi_1+ 2\pi k,\tilde \xi)$
are periodic functions of period $X=1$, $\hat u(\tilde \xi)$
denoting with slight abuse of notation the Fourier transform of $u$
in the full variable $x$.
By Parseval's identity, the Bloch--Fourier transform
$u(x)\to \hat u(\xi, x_1)$ is an isometry in $L^2$:
\be\label{iso}
\|u\|_{L^2(x)}=
\|\hat u\|_{L^2(\xi; L^2(x_1))},
\ee
where $L^2(x_1)$ is taken on $[0,1]$ and $L^2(\xi)$
on $[-\pi,\pi]\times \R^{d-1}$.
Moreover, it diagonalizes the periodic-coefficient operator $L$,
yielding the {\it inverse Bloch--Fourier transform representation}
\be\label{IBFT}
e^{Lt}u_0=
\Big(\frac{1}{2\pi }\Big)^d \int_{-\pi}^{\pi}\int_{\R^{d-1}}
e^{i\xi \cdot x}e^{L_\xi t}\hat u_0(\xi, x_1)
d\xi_1\, d\tilde \xi
\ee
relating behavior of the linearized system to
that of the diagonal operators $L_\xi$.

Following \cite{OZ4}, we assume along with (H1)--(H2) the
{\it strong spectral stability} conditions:

(D1) $\sigma(L_\xi) \subset \{ \hbox{\rm Re} \lambda <0 \} $ for $\xi\ne 0$.

(D2) $\hbox{\rm Re} \sigma(L_{\xi}) \le -\theta |\xi|^2$, $\theta>0$,
for $\xi\in \R^d$ and $|\xi|$ sufficiently small.

(D3) $\lambda=0$ is a semisimple eigenvalue
of $L_{0}$ of multiplicity exactly $n+1$.\footnote{
The zero eigenspace of $L_0$ is at least $(n+1)$-dimensional
by the linearized existence theory and (H2), and hence $n+1$ is the minimal
multiplicity; see \cite{Se1,OZ3}.
As noted in \cite{OZ1,OZ3}, minimal dimension of this zero eigenspace
implies that $(M,N\Omega)$ of \eqref{e:wkb}
gives a nonsingular coordinatization
of the family of periodic traveling-wave solutions near $\bar u$.
}

For each fixed angle $\hat \xi:=\xi/|\xi|$,
expand $L_\xi= L_0 + |\xi| L^1 +|\xi|^2L^2$.
By assumption (D3) and standard spectral perturbation
theory, there exist $n+1$ smooth eigenvalues
\be\label{e:surfaces}
\lambda_j(\xi)= -i a_j(\xi)+o(|\xi|)
\ee
of $L_\xi$ bifurcating from $\lambda=0$ at $\xi=0$, where
$-i a_j$ are homogeneous degree one functions given by $|\xi|$ times the
eigenvalues of $\Pi_0 L^1|_{\kernel L_0}$, with $\Pi_0$
the zero eigenprojection of $L_0$.

Conditions (D1)--(D3) are exactly the spectral assumptions of
\cite{S1,S2,S3}, corresponding to ``dissipativity'' of the
large-time behavior of the linearized system.
As in \cite{OZ4}, we make the further nondegeneracy hypothesis:

(H3) The eigenvalues $\lambda=-i a_j(\xi)/|\xi|$ of $\Pi_0 L^1_{\kernel L_0}$
are simple.

\noindent
The functions $a_j$ may be seen to be the characteristics
associated with the Whitham averaged system \eqref{e:wkb}--\eqref{con}
linearized about the values of $M$, $S$, $N$, $\Omega$ associated
with the background wave $\bar u$; see \cite{OZ3,OZ4}.
Thus, (D1) implies weak hyperbolicity
of (\ref{e:wkb})--\eqref{con} (reality of $a_j$),
while (H1) corresponds to strict hyperbolicity.

\subsection{Main results}

With these preliminaries, we can now state our main results.

\begin{theo}\label{main}
Assuming (H1)--(H3) and (D1)--(D3),
for some $C>0$ and $\psi \in W^{K,\infty}(x,t)$,
\ba\label{eq:smallsest}
|\tilde u-\bar u(\cdot -\psi)|_{L^p}(t)&\le
C(1+t)^{-\frac{d}{2}(1-1/p)}
|\tilde u-\bar u|_{L^1\cap H^K}|_{t=0},\\
|\tilde u-\bar u(\cdot -\psi)|_{H^K}(t)&\le
C(1+t)^{-\frac{d}{4}}
|\tilde u-\bar u|_{L^1\cap H^K}|_{t=0},\\
|(\psi_t,\psi_x)|_{W^{K+1,p}}&\le
C(1+t)^{-\frac{d}{2}(1-1/p)}
|\tilde u-\bar u|_{L^1\cap H^K}|_{t=0},\\
\ea
and
\ba\label{eq:stab}
|\tilde u-\bar u|_{ L^p}(t), \; |\psi(t)|_{L^p}&\le
C(1+t)^{-\frac{d}{2}(1-\frac{1}{p}) + \frac{1}{2}}
|\tilde u-\bar u|_{L^1\cap H^K}|_{t=0}
\ea
for all $t\ge 0$, $p\ge 2$, $d= 1$,
for solutions $\tilde u$ of \eqref{eqn:1conslaw} with
$|\tilde u-\bar u|_{L^1\cap H^K}|_{t=0}$ sufficiently small.
In particular, $\bar u$ is nonlinearly bounded
$L^1\cap H^K\to L^\infty$ stable for dimension $d=1$.
\end{theo}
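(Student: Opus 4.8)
The plan is to set up a nonlinear iteration on the perturbation variable, but with the modulation function $\psi$ built in from the start so that the slowest-decaying linear mode (translation) is subtracted off rather than fed back into the Duhamel loop. First I would decompose the linearized solution operator $e^{Lt}$ via the inverse Bloch--Fourier representation \eqref{IBFT}, splitting the $\xi$-integral into a low-frequency part $|\xi|\le\rho$ and a high-frequency part $|\xi|\ge\rho$. On the high-frequency part, (D1) together with parabolic smoothing gives exponential decay in $H^K$. On the low-frequency part, (D2)--(D3) and (H3) let me expand $e^{L_\xi t}$ along the $n+1$ smooth spectral surfaces $\lambda_j(\xi)=-ia_j(\xi)+o(|\xi|)$ of \eqref{e:surfaces}; the corresponding spectral projections $\Pi_j(\xi)$ are smooth in $\hat\xi$ and $|\xi|$, and one of the resulting pieces carries the $\bar u'(x_1)$-direction (translation). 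The key \emph{refined Green function estimate} is to isolate this translational piece as a scalar convolution kernel — morally a sum of Gaussian-like kernels $e^{-i a_j \xi}e^{-\theta|\xi|^2 t}$ — whose $x$- and $t$-derivatives decay one power of $t^{1/2}$ faster than the kernel itself, while the kernel itself in $d=1$ only decays like a bounded $L^\infty$ function of the self-similar variable. Writing $e^{Lt} = \bar u'(x_1)\, e(x,t) + \tilde G(x,t)$, with $\tilde G$ enjoying the full $(1+t)^{-\frac12(1-1/p)}$ decay and $e$ the slowly-decaying scalar Green kernel, sets the stage.

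Next I would introduce the ansatz $\tilde u(x,t) = \bar u(x_1 - \psi(x,t)) + v(x,t)$ and derive the coupled system for $(v,\psi)$. Taylor-expanding $\bar u(x_1-\psi) = \bar u(x_1) - \bar u'(x_1)\psi + \Or(|\psi|^2)$, one finds that $v$ satisfies $v_t - Lv = \partial_x(\text{quadratic in } v) + (\partial_t + L)(\bar u'\psi) + \Or(|\psi|\,|\psi_x| + |\psi|^2 + \dots)$, i.e. a Duhamel formula
\be\label{eq:duhamel-plan}
v(t) = e^{Lt}v_0 + \int_0^t e^{L(t-\tau)}\mathcal N(\tau)\,d\tau,
\ee
where $\mathcal N$ collects the quadratic flux term $\partial_x(v\otimes v)$ plus the terms generated by the modulation. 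Here is the crucial structural point: I would \emph{choose} $\psi$ precisely so as to cancel the projection onto the translational kernel $e(x,t)$ of the right-hand side — that is, $\psi$ is defined implicitly by
\be\label{eq:psidef-plan}
\psi(x,t) = \int \! e(x-y,t)\, [v_0 + \text{source terms}](y)\,dy + \int_0^t\!\!\int e(x-y,t-\tau)\,[\mathcal N(\tau)](y)\,dy\,d\tau ,
\ee
so that $v$ itself only ever sees the faster-decaying Green function $\tilde G$. Differentiating \eqref{eq:psidef-plan} in $x$ and $t$ and using that $\partial_{x,t} e$ decays a full factor $t^{1/2}$ faster gives the $(1+t)^{-\frac12(1-1/p)}$ bounds on $(\psi_t,\psi_x)$ claimed in \eqref{eq:smallsest}, while $\psi$ itself inherits only the $(1+t)^{-\frac12(1-1/p)+\frac12}$ bound of \eqref{eq:stab} — in $d=1$ merely $L^\infty$-boundedness, exactly as in the statement.

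Then I would close the argument by a standard continuous-induction / bootstrap on the quantity
\[
\zeta(t) := \sup_{0\le\tau\le t}\Big( |v(\tau)|_{L^2\cap L^\infty}\,(1+\tau)^{1/4} + |(\psi_t,\psi_x)(\tau)|_{W^{K+1,\infty}}(1+\tau)^{1/2} + |\psi(\tau)|_{L^\infty} \Big),
\]
assuming $\zeta(t)\le C_0\varepsilon$ with $\varepsilon = |\tilde u - \bar u|_{L^1\cap H^K}|_{t=0}$ and showing $\zeta(t)\le C_1\varepsilon + C_2\zeta(t)^2$. The linear term is $C_1\varepsilon$ by the Green function estimates applied to $v_0\in L^1\cap H^K$ and by the definition \eqref{eq:psidef-plan}. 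For the quadratic term one plugs the induction hypothesis into $\mathcal N$: the flux term $\partial_x(v\otimes v)$ convolved against $\tilde G$ (whose $x$-derivative gains an extra $t^{1/2}$) gives $\int_0^t (t-\tau)^{-3/4}(1+\tau)^{-1/2}d\tau \lesssim (1+t)^{-1/4}$, which closes; the modulation-generated terms are products of $\psi$ and its derivatives, and although $\psi$ alone is only bounded, every such term contains at least one derivative of $\psi$ \emph{or} is quadratically small, so it decays fast enough — this is the ``novel cancellation'' the introduction advertises, and I expect \textbf{verifying that every nonlinear term arising from the modulation carries a compensating decay factor} (equivalently, that no ``bad'' term $\psi\cdot(\text{slowly decaying})$ survives in $\mathcal N$ after the choice \eqref{eq:psidef-plan}) to be the main obstacle. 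Energy estimates of Kawashima type, using (H1) parabolicity, upgrade the $L^2$ control of $v$ to $H^K$ and supply the $(1+t)^{-d/4}$ bound in \eqref{eq:smallsest}; Sobolev embedding in $d=1$ then yields the $L^\infty$ stability statement.
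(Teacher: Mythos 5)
Your high-level architecture matches the paper's: refined Green function decomposition $G = \bar u' e + \tilde G$ with the translation mode isolated, implicit definition of $\psi$ as the $e$-projection of the Duhamel integral so that $v$ only sees $\tilde G$, and a continuous-induction bootstrap on a time-weighted norm. The choice of $\zeta$, the use of a damping/energy estimate to lift $L^2$ control to $H^K$, and the observation that $\partial_{x,t}e$ gains a factor $t^{-1/2}$ over $e$ are all correct and aligned with the paper.

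However, there is a genuine gap in the step you yourself flag as "the main obstacle": the Taylor expansion
$\bar u(x_1 - \psi) = \bar u(x_1) - \bar u'(x_1)\psi + \Or(|\psi|^2)$
cannot work in $d=1$. By \eqref{eq:stab}, $\psi$ is merely bounded in $L^\infty$, not decaying; hence the $\Or(|\psi|^2)$ remainder is also merely bounded, and the source term fed into the Duhamel integral against $\tilde G$ would not decay at all. The iteration $\zeta \le C_1\varepsilon + C_2\zeta^2$ does not close against a bounded-but-non-decaying forcing; the ``bad'' term $\psi\cdot(\text{slowly decaying})$ you worry about is in fact produced by the Taylor expansion itself and cannot be argued away afterwards. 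The paper's resolution is not a finer cancellation estimate layered on top of the expansion, but a different \emph{exact} change of variables: set $v(x,t) = \tilde u(x+\psi(x,t),t) - \bar u(x)$ (equation \eqref{pertvar}/\eqref{uvar}, not $\tilde u - \bar u(\cdot-\psi)$), and compute the residual exactly. Lemma \ref{4.1} is then an algebraic identity, with no remainder:
$u_t + f(u)_x - u_{xx} = (\partial_t - L)\bar u'\psi + \partial_x R + (\partial_t + \partial_x^2)S$
where $R$ and $S$ are quadratic in $(v,\psi_t,\psi_x,\psi_{xx})$ and \emph{never contain $\psi$ undifferentiated}. Because only $\psi_t,\psi_x,\psi_{xx}$ (which decay like $(1+t)^{-1/2}$) and $v$ enter the source, the iteration closes; the division by $1+\psi_x$ in $R$ is harmless since $\psi_x$ is small. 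Note also the sign in your heuristic equation should be $(\partial_t - L)(\bar u'\psi)$, which matches the cancellation identity \eqref{e:cancel}. Without the exact change of variables, the rest of the scheme — however well set up — does not go through in dimension one.
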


\begin{theo}\label{main2}
Assuming (H1)--(H3) and (D1)--(D3), for any $\eps>0$,
some $C>0$ and $\psi \in W^{K,\infty}(x,t)$,
\ba\label{eq:smallsest2}
|\tilde u-\bar u(\cdot -\psi)|_{L^p}(t)&\le
C(1+t)^{-\frac{d}{2}(1-1/p)}
|\tilde u-\bar u|_{L^1\cap H^K}|_{t=0},\\
|\tilde u-\bar u(\cdot -\psi)|_{H^K}(t)&\le
C(1+t)^{-\frac{d}{4}}
|\tilde u-\bar u|_{L^1\cap H^K}|_{t=0},\\
|(\psi_t,\psi_x)|_{W^{K+1,p}}&\le
C(1+t)^{-\frac{d}{2}(1-1/p)+\eps -\frac{1}{2}}
|\tilde u-\bar u|_{L^1\cap H^K}|_{t=0},\\
\ea
and
\ba\label{eq:stab2}
|\tilde u-\bar u|_{ L^p}(t), \; |\psi(t)|_{L^p}&\le
C(1+t)^{-\frac{d}{2}(1-\frac{1}{p}) + \eps}
|\tilde u-\bar u|_{L^1\cap H^K}|_{t=0},\\
|\tilde u-\bar u|_{ H^K}(t), \; |\psi(t)|_{H^K}&\le
C(1+t)^{-\frac{d}{4} + \eps}
|\tilde u-\bar u|_{L^1\cap H^K}|_{t=0},\\
\ea
for all $t\ge 0$, $p\ge 2$, $d= 2$,
for solutions $\tilde u$ of \eqref{eqn:1conslaw} with
$|\tilde u-\bar u|_{L^1\cap H^K}|_{t=0}$ sufficiently small.
In particular, $\bar u$ is nonlinearly
asymptotically $L^1\cap H^K\to H^K$ stable for dimension $d=2$.
\end{theo}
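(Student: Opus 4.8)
The plan is to run a nonlinear iteration in the spirit of the viscous-shock stability analyses \cite{OZ1,OZ2,OZ3,OZ4,HoZ}, built on the sharp Bloch-wave linear estimates that (D1)--(D3) and (H3) make available, but organized around the modulated ansatz \eqref{mod} as in \cite{S1,S2,S3}. First I would set up the perturbation equations: writing $\tilde u(x,t)=\bar u(x_1-\psi(x,t))+v(x,t)$, substituting into \eqref{eqn:1conslaw}, and expanding $\bar u(x_1-\psi)=\bar u(x_1)-\bar u'(x_1)\psi+O(\psi^2)$, one obtains $v_t-Lv=\mathcal N$ with a nonlinear source schematically of the form $\mathcal N=\sum_j\partial_{x_j}\mathcal Q_j(v,\psi,\nabla_x\psi,\psi_t)+\bar u'(x_1)\,\psi_t$; the structural point, to be checked with care, is that the modulation residual is an exact spatial divergence plus the single scalar multiple of $\bar u'$, so that the \emph{only} term of $\mathcal N$ not already carrying an $x$-derivative is $\bar u'\psi_t$.

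Next I would establish the linear estimates. Using \eqref{IBFT}, split $e^{Lt}=S_1(t)+S_2(t)$ into low-frequency ($|\xi|\le\delta$) and high-frequency pieces; by (D1)--(D2) the high-frequency part $S_2(t)$ decays exponentially and is controlled in $H^K$ together with parabolic smoothing, while on the range of the low-frequency spectral projection, (D3) and (H3) give $e^{L_\xi t}\Pi_\xi=\sum_{j=1}^{n+1}e^{\lambda_j(\xi)t}q_j(\xi)\langle\tilde q_j(\xi),\cdot\rangle$ with $\lambda_j$ as in \eqref{e:surfaces}, whence $S_1(t)$ has a kernel $G(x,y,t)$. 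The first new step is a refined decomposition $G=\bar u'(x_1)\,s(x,y,t)+\tilde G(x,y,t)$ isolating the translational mode $q_1(0)\propto\bar u'$: here $s$ obeys heat-kernel (Gaussian) bounds in $(x-y,t)$, whereas the complementary kernel $\tilde G$ — and the data and source contributions it governs — obeys bounds sharper by a factor $(1+t)^{-1/2}$ per $x$- or $y$-derivative, i.e.\ $\tilde G$ is morally a spatial derivative of a heat-kernel-bounded object. This is the periodic-Bloch counterpart of the excited/scattering kernel splitting used for shocks, and it is what makes translation the slowest linear mode.

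I would then fix $\psi$ so as to cancel the resonant term: define
\[
\psi(x,t)=-\int_{\R^d}s(x,y,t)v_0(y)\,dy-\int_0^t\!\!\int_{\R^d}s(x,y,t-\tau)\,\mathcal N(y,\tau)\,dy\,d\tau
\]
(with $v_0:=(\tilde u-\bar u)|_{t=0}$), so that in the Duhamel representation $v(t)=e^{Lt}v_0+\int_0^t e^{L(t-\tau)}\mathcal N(\tau)\,d\tau$ the $\bar u's$-part of $G$ acting on the data and on $\mathcal N$ is exactly absorbed into $\bar u'\psi$, leaving $v$ expressed through $\tilde G$ and $S_2$ alone; smoothing of $s$ and the form of $\mathcal N$ then yield $\psi\in W^{K,\infty}(x,t)$. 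Finally I would run a continuity argument on a functional $\zeta(t)$ measuring $|v|_{L^p}$, $|v|_{H^K}$, $|(\psi_t,\psi_x)|_{W^{K+1,p}}$ and $|\psi|_{L^p}$ over $2\le p\le\infty$ and $0\le\tau\le t$, each weighted by the inverse of the rate asserted for it in \eqref{eq:smallsest2}--\eqref{eq:stab2} (resp.\ \eqref{eq:smallsest}--\eqref{eq:stab} for $d=1$), and show $\zeta(t)\le C(E_0+\zeta(t)^2)$ with $E_0:=|\tilde u-\bar u|_{L^1\cap H^K}|_{t=0}$, using the linear bounds above, Hausdorff--Young and Sobolev interpolation for the low-frequency kernels, and a separate $H^K$ energy estimate to supplement the $L^2$-based Bloch bounds; smallness of $E_0$ and continuity of $\zeta$ then give Theorem \ref{main2} (and, with $\psi$ merely bounded rather than decaying, Theorem \ref{main}).

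The hard part will be making the nonlinear iteration close in the critical dimension $d=2$ — this is exactly where the analysis departs from $d\ge3$, and it is the second new step. The contribution $\int_0^t\!\!\int\tilde G(t-\tau)\,\bar u'\psi_\tau\,dy\,d\tau$ of the resonant residual cannot be estimated directly, since $\psi_\tau$ decays only at gradient rate; the device is to integrate by parts in $\tau$, using $\partial_\tau\tilde G(t-\tau)=-\partial_t\tilde G(t-\tau)$ and the extra $(1+t)^{-1/2}$ gained from each time derivative of the heat-kernel-type $\tilde G$, trading $\psi_\tau$ for $\psi$ plus boundary terms $\tilde G(t)\bar u'\psi(0)$ and $\tilde G(0)\bar u'\psi(t)$ (the latter harmless, as it only reproduces part of the modulation). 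Likewise the divergence terms $\partial_{x_j}\mathcal Q_j$ are handled by moving the derivative onto $\tilde G$, again gaining $(1+t)^{-1/2}$, so that the governing nonlinear convolutions become borderline in $d\le2$ — e.g.\ in the $L^2$ estimate one meets $\int_0^t(t-\tau)^{-1}(1+\tau)^{-1}\,d\tau\sim(1+t)^{-1}\log(1+t)$ for $d=2$ — and close only after these gains, at the cost of the $\eps$-loss in $d=2$ and leaving $\psi$ merely bounded in $d=1$. Verifying that the modulation ansatz leaves no uncancelled non-derivative source, and that every surviving term fits one of these two schemes, is the delicate algebraic core of the argument.
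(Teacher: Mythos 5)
Your overall architecture (Bloch decomposition, refined split $G = \bar u'e + \tilde G$, implicit choice of $\psi$, weighted nonlinear iteration) matches the paper's, but there is a genuine gap in the step you call ``the delicate algebraic core.'' You propose to write $\tilde u = \bar u(x_1-\psi)+v$ and Taylor expand $\bar u(x_1-\psi)=\bar u(x_1)-\bar u'(x_1)\psi+O(\psi^2)$. This cannot work: $\psi$ is only bounded in $d=1$ and decays only like $(1+t)^{\eps}$ in $d=2$, so the $O(\psi^2)$ remainder (and the $\bar u''\psi\psi_t$-type corrections arising when you replace $\bar u'(x_1-\psi)$ by $\bar u'(x_1)$) are non-decaying, non-derivative sources that make the iteration impossible to close. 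The paper instead defines the perturbation variable by shifting inside $\tilde u$, \emph{not} $\bar u$: $v(x,t)=\tilde u(x_1+\psi(x,t),\tilde x,t)-\bar u(x_1)$ (equations \eqref{pertvar}, \eqref{pertvar2}). With this choice no Taylor expansion is ever performed; the algebra of Lemmas \ref{4.1} and \ref{l:pert} shows that the residual is \emph{exactly} $(\partial_t-L)\bar u'(x_1)\psi$ plus quadratic terms in $(v,\psi_t,\psi_x)$ cast in divergence form, with $\psi$ never appearing undifferentiated except inside the single structured term. That algebraic identity is the whole point of the ansatz and is what you would need to verify; as stated your ansatz does not deliver it.

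A second, related difference: you propose to integrate by parts in $\tau$ in $\int_0^t\!\int\tilde G(t-\tau)\bar u'\psi_\tau$, trading $\psi_\tau$ for $\psi$ via $\partial_t\tilde G$. The paper instead proves and applies the exact cancellation identity of Proposition \ref{p:cancellation}, $\int_0^t\!\int G(x,t-s;y)(\partial_s-L_y)f(y,s)\,dy\,ds=f(x,t)$, applied to $f=\bar u'\psi$ with the \emph{full} Green function $G$; this converts the resonant source into $\bar u'(x)\psi(x,t)$ with no residual time integral and no short-time singularity. Your version with $\tilde G$ in place of $G$ leaves $\int_0^t\partial_t\tilde G(t-\tau)\bar u'\psi\,d\tau$, and by \eqref{sheatbds} the kernel $\partial_t\tilde G$ has a nonintegrable $(t-\tau)^{-1-\alpha}$ short-time singularity while $\psi$ does not decay, so that integral is not finite as written. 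Even setting the first issue aside, the IBP route would need careful regularization near $\tau=t$; the cancellation identity is the clean mechanism that the divergence-plus-$(\partial_t-L)\bar u'\psi$ structure of the source is designed to exploit. (You also do not discuss the non-divergence-form remainder $T$ appearing for $d=2$ in Lemma \ref{l:pert}, which the paper handles with the faster-decaying estimate \eqref{sestd2nondiv}, but that is a minor omission.)
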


\br
In Theorem \ref{main2}, derivatives in $x\in\RM^2$ refer to total derivatives.  Moreover, unless specified by an
appropriate index, throughout this paper derivatives in spatial variable $x$ will always refer to the total derivative
of the function.
\er

In dimension one, Theorem \ref{main} asserts only
bounded $L^1\cap H^K \to L^\infty$ stability,
a very weak notion of stability.
The absence of decay in perturbation $\tilde u-\bar u$
indicates the delicacy of the nonlinear analysis in this case.
In particular, it is crucial to separate off the slower-decaying
modulated behavior \eqref{mod} in order to close the nonlinear iteration
argument.

\br\label{nonlinp}
\textup{
In dimension $d=1$, it is straightforward to show that the results of
Theorem \ref{main} extend to all $1\le p\le \infty$ using the pointwise
techniques of \cite{OZ2}; see Remark \ref{lowp}.
}
\er

\br\label{compare}
\textup{
The slow decay of
$|\tilde u-\bar u|_{ L^p}(t) \sim |\psi(t)|_{L^p}$
in \eqref{eq:stab}
is due to nonlinear interactions;
as shown in \cite{OZ2,OZ4}, the linearized decay rate is faster
by factor $(1+t)^{-1/2}$ (Proposition \ref{p:linstab}).
In \cite{OZ4}, it was shown that for $d\ge 3$, where linear
effects dominate behavior, \eqref{eq:stab}
may be replaced by the stronger estimate
$|\tilde u-\bar u|_{ L^p}(t), \; |\psi(t)|_{L^p}\le
C(1+t)^{-\frac{d}{2}(1-\frac{1}{p})}
|\tilde u-\bar u|_{L^1\cap H^K}|_{t=0}.$
These distinctions reflect fine details of both linearized estimates
(Section \ref{s:refined}) and nonlinear structure
(Sections \ref{s:pert}--\ref{s:cancellation})
that are not
immediately
apparent from the formal
Whitham approximation \eqref{e:wkb}--\eqref{con}.
}
\er

\subsection{Discussion and open problems}\label{s:discussion}

Linearized stability under the same assumptions, with sharp rates of decay,
was established for $d=1$ \cite{OZ2} and for $d\ge 1$ in \cite{OZ4},
along with nonlinear stability for $d\ge 3$.
Theorem \ref{main} completes this line of investigation by establishing
nonlinear stability in the critical dimensions $d=1,\,2$, a
fundamental open problem cited in \cite{OZ1,OZ4}.

This gives a generalization of the work of \cite{S1,S2,S3}
for reaction diffusion equations to the case of viscous conservation laws.
Recall that the analysis of \cite{S1,S2,S3} concerns
also multiply periodic waves, i.e., waves
that are either periodic or else constant in each coordinate direction.
It is straightforward to verify that the methods of this paper
apply essentially unchanged to this case, to give a corresponding
stability result under the analog of (H1)--(H3), (D1)--(D3),
as we intend to report further in a future work.
Likewise, the extension from the semilinear parabolic case treated
here to the general quasilinear case is straightforward,
following the treatment of \cite{OZ4}.

On the other hand, as noted in \cite{OZ2}, condition (D3) is in
the conservation law setting nongeneric, corresponding to the special
``quasi-Hamiltonian'' situation studied there;
in particular, it implies that speed is to first order constant among
the family of spatially periodic traveling-wave solutions nearby $\bar u$.
In the generic case that (D3) is violated, behavior is essentially
different \cite{OZ1,OZ2}, and
perturbations decay more slowly at the linearized level.
Nonlinear stability remains an interesting open problem in this setting.
%

Our approach to stability in the critical dimensions $d=1,\, 2$,
as suggested in \cite{OZ4}, is,
loosely following the approach of \cite{S1,S2,S3}, to subtract out
a slower-decaying part of the solution described by an appropriate
modulation equation and show that the residual decays sufficiently
rapidly to close a nonlinear iteration.
It is worth noting that the modulated approximation
$\bar u(x_1-\psi(x,t))$ of \eqref{mod} is not the full Ansatz
\be\label{modwkb}
\bar u^a(\Psi(x,t)),
\ee
$\Psi(x,t):=x_1-\psi(x,t)$,
associated with the Whitham averaged system \eqref{e:wkb}--\eqref{con},
where $\bar u^a$ is the manifold of periodic
solutions near $\bar u$ introduced below (H2), but only the translational
part not involving perturbations $a$ in the profile.
(See \cite{OZ3} for the derivation of \eqref{e:wkb}--\eqref{con}
and \eqref{modwkb}.)
That is, we don't need to separate out all variations along the
manifold of periodic solutions,
but only the special variations connected with translation invariance.

The technical reason is an asymmetry in
$y$-derivative estimates in the parts of the Green function associated
with these various modes,
something that is not apparent without a detailed study of linearized
behavior as carried out here.
This also makes sense formally, if one considers that \eqref{e:wkb}
indicates that variables $a$, $\nabla_x \Psi$ are roughly comparable,
which would suggest, by the diffusive behavior $\Psi>>\nabla_x \Psi$,
that $a$ is neglible with respect to $\Psi$.

However, note that in the case that (D3) holds, hence wave speed
is stationary along the manifold of periodic solutions,
the final equation of \eqref{e:wkb} decouples to
$(\Psi_x)_t= (\Omega N)_t=0$, and could be written as $\Psi_t=0$
in terms of $\Psi$ alone.  Hence, there is some ambiguity in
this degenerate case which of $\Psi$, $\Psi_x$
is the primary variable, and in terms of
linear behavior, the decay of variations $a$ and $\Psi$ are
in fact comparable \cite{OZ4}; in the generic case, $a$ and $\Psi_x$ are
comparable at the linearized level \cite{OZ2}.
It would be very interesting to better understand
the connection
between the Whitham averaged system (or suitable higher-order
correction) and behavior
at the nonlinear level, as explored at the linear level
in \cite{OZ3,OZ4,JZ1,JZB}.


\section{Basic linearized stability estimates}\label{linests}
We begin by recalling the basic linearized
stability estimates derived in \cite{OZ4}.
We will sharpen these afterward in Section \ref{s:refined}.
By standard spectral perturbation theory \cite{K}, the total
eigenprojection $P(\xi)$ onto the eigenspace of $L_\xi$
associated with the eigenvalues $\lambda_j(\xi)$, $j=1,\dots, n+1$
described in the introduction
is well-defined and analytic in $\xi$ for $\xi$ sufficiently small,
since these (by discreteness of the spectra of $L_\xi$) are
separated at $\xi=0$ from the rest of the spectrum of $L_0$.
Introducing a smooth cutoff function $\phi(\xi)$ that
is identically one for $|\xi|\le \eps$ and identically
zero for $|\xi|\ge 2\eps$, $\eps>0$ sufficiently small,
we split the solution operator $S(t):=e^{Lt}$ into
low- and high-frequency parts
\be\label{SI}
S^I(t)u_0:=
\Big(\frac{1}{2\pi }\Big)^d \int_{-\pi}^{\pi}\int_{\R^{d-1}}
e^{i\xi \cdot x}
\phi(\xi)P(\xi) e^{L_\xi t}\hat u_0(\xi, x_1)
d\xi_1\, d\tilde \xi
\ee
and
\be\label{SII}
S^{II}(t)u_0:=
\Big(\frac{1}{2\pi }\Big)^d \int_{-\pi}^{\pi}\int_{\R^{d-1}}
e^{i\xi \cdot x}
\big(I-\phi P(\xi)\big)
e^{L_\xi t}\hat u_0(\xi, x_1)
d\xi_1\, d\tilde \xi.
\ee

\subsection{High-frequency bounds}\label{HF}
By standard sectorial bounds \cite{He,Pa} and spectral separation
of $\lambda_j(\xi)$ from the remaining spectra of $L_\xi$,
we have trivially the exponential decay bounds
\ba\label{semigp}
\|e^{L_\xi t}(I-\phi P(\xi))f\|_{L^2([0,X])}
&\le   Ce^{-\theta t}\|f\|_{L^2([0,X])},\\
\|e^{L_\xi t}(I-\phi P(\xi))\partial_{x_1}^l f\|_{L^2([0,X])}
&\le   Ct^{-\frac{l}{2}}e^{-\theta t}\|f\|_{L^2([0,X])},\\
\|\partial_{x_1}^l e^{L_\xi t}(I-\phi P(\xi)) f\|_{L^2([0,X])}
&\le   Ct^{-\frac{l}{2}}e^{-\theta t}\|f\|_{L^2([0,X])},
\ea
for $\theta$, $C>0$, and $0\le m\le K$ ($K$ as in (H1)).
Together with (\ref{iso}), these give immediately the
following estimates.

\begin{proposition}[\cite{OZ4}]\label{p:hf}
Under assumptions (H1)--(H3) and (D1)--(D2),
for some $\theta$, $C>0$,
and all $t>0$, $2\le p\le \infty$, $0\le l\le K+1$, $0\le m\le K$,
\ba\label{SIIest}
\|\partial_x^l S^{II}(t) f\|_{L^2(x)},\;
\|S^{II}(t)\partial_x^l f\|_{L^2(x)}&\le
Ct^{-\frac{l}{2}}e^{-\theta t}\|f\|_{L^2(x)},\\
\|\partial_x^m S^{II}(t)  f\|_{L^p(x)},\;
\|S^{II}(t) \partial_x^m f\|_{L^p(x)}&\le
Ct^{-\frac{d}{2}(\frac{1}{2}-\frac{1}{p})- \frac{m}{2}}
e^{-\theta t}\|f\|_{L^2(x)},
\ea
where, again, derivatives in the variable $x\in\RM^d$ refer to total derivatives.
\end{proposition}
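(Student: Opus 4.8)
\noindent\emph{Proof proposal.} The plan is to work entirely on the Bloch--Fourier side. Starting from the representation \eqref{SII} and Parseval's identity \eqref{iso}, one reduces each $L^2(x)$ norm to an $L^2_\xi$-integral of fibered $L^2([0,X])$ norms, applies the pointwise-in-$\xi$ sectorial bounds \eqref{semigp} fiberwise, and reassembles; the passage from $L^2(x)$ to $L^p(x)$ is then a Sobolev embedding applied to the output. Since $e^{-\theta t}$ occurs in every bound it is enough to treat $0<t\le1$ (for $t\ge1$ the exponential absorbs any fixed power of $t$, after shrinking $\theta$), so the whole issue is small-time smoothing together with uniformity of the exponential gap in the transverse frequency $\txi$. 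For the $L^2\to L^2$ estimates, write the total derivative $\partial_x^l=\sum_{a+|b|=l}\partial_{x_1}^a\partial_{\tx}^b$; on the Bloch side $\partial_{\tx}^b$ is multiplication by $(i\txi)^b$ while $\partial_{x_1}^a$ acts within each fiber. On $|\xi|\le2\eps$, where $I-\phi P(\xi)$ is the complementary spectral projection, the weight $|\txi|^{|b|}$ is bounded and \eqref{semigp} gives the fiber bound $Ct^{-a/2}e^{-\theta t}\le Ct^{-l/2}e^{-\theta t/2}$; on $|\xi|\ge\eps$, where $I-\phi P(\xi)=I$, one combines \eqref{semigp} with the explicit dissipative term $-|\txi|^2$ in \eqref{e:fourier}, which supplies the analogous transverse smoothing, so that $|\txi|^{|b|}\|\partial_{x_1}^a e^{L_\xi t}g\|_{L^2([0,X])}\le Ct^{-a/2}|\txi|^{|b|}e^{-c|\txi|^2 t}e^{-\theta' t}\|g\|\le Ct^{-l/2}e^{-\theta' t}\|g\|$, using $r^\sigma e^{-cr^2 t}\le C_\sigma t^{-\sigma/2}$. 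Integrating in $\xi$ via \eqref{iso} gives the asserted $L^2(x)$ bound; the variant with the derivatives on $f$ is identical, using the second rather than the third line of \eqref{semigp}.

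For the $L^2\to L^p$ bounds the same fiberwise computation in fact yields, for $0\le s\le K+1$ (the range permitted by (H1)), the stronger statement $\|S^{II}(t)f\|_{H^s(x)}\le Ct^{-s/2}e^{-\theta t}\|f\|_{L^2(x)}$ --- both the transverse weight $\langle\txi\rangle^s$ and the $H^s_{x_1}$-smoothing of the analytic semigroup contributing a factor $t^{-s/2}$ --- the integer cases coming directly from \eqref{semigp} and the fractional ones by interpolation. For $2\le p<\infty$ the claim then follows by applying the endpoint Sobolev embedding $H^{d(\frac12-\frac1p)}(\R^d)\hookrightarrow L^p(\R^d)$ to $\partial_x^m S^{II}(t)f$ and using the bound above with $s=m+d(\tfrac12-\tfrac1p)\le K+1$ (valid since $d\le2$). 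For $p=\infty$ one interpolates instead: applying the Gagliardo--Nirenberg inequality $\|g\|_{L^\infty}\le C\|g\|_{L^q}^{1-d/q}\|\nabla g\|_{L^q}^{d/q}$ for a fixed $q\in(d,\infty)$ to $g=\partial_x^m S^{II}(t)f$ and inserting the already-established $L^q$ bounds for $\partial_x^m$ and $\partial_x^{m+1}$, one finds after a short calculation that the power of $t$ collapses to $-\tfrac m2-\tfrac d4$ independently of $q$, which is exactly $-\tfrac m2-\tfrac d2(\tfrac12-\tfrac1\infty)$. The variants with derivatives falling on $f$ are handled the same way.

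The step that requires genuine care --- and the only real obstacle --- is uniformity of the factor $e^{-\theta t}$ in the transverse frequency: for $|\txi|$ in a bounded range one uses continuity of spectrum and discreteness of $\sigma(L_\xi)$ on $[0,X]$ together with (D1)--(D2) to extract a uniform spectral gap, while for $|\txi|$ large the quadratic dissipation $-|\txi|^2$ in \eqref{e:fourier} supplies (indeed strengthens) it. It is also this splitting --- treating the $x_1$-direction by the sectorial bounds and the transverse directions by the explicit dissipation, rather than using a crude $d$-dimensional Sobolev embedding --- that produces the sharp power $-\tfrac d2(\tfrac12-\tfrac1p)$ of $t$; everything else is a routine appeal to analytic-semigroup and Plancherel/Sobolev estimates, as in \cite{OZ4}, with the finer linearized refinements deferred to Section \ref{s:refined}.
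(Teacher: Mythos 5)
Your argument is correct and reaches the same sharp exponent, but by a genuinely different route than the paper's. You first assemble a full $d$-dimensional $H^s(\RR^d)$ smoothing estimate $\|S^{II}(t)f\|_{H^s(x)}\le Ct^{-s/2}e^{-\theta t}\|f\|_{L^2(x)}$ by splitting mixed derivatives on the Bloch side ($\partial_{x_1}$ via the fiberwise sectorial bounds \eqref{semigp}, $\partial_{\tilde x}$ via the multiplier $(i\tilde\xi)^b$ controlled by the transverse dissipation for $|\tilde\xi|$ large and a bounded weight for $|\tilde\xi|$ small), and you then invoke the critical $d$-dimensional Sobolev embedding for $p<\infty$ and Gagliardo--Nirenberg for $p=\infty$. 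The paper instead keeps the two directions separate throughout: Parseval in $x_1$ together with Hausdorff--Young in $\tilde\xi$ gives the mixed-norm bound $\|S^{II}(t)f\|_{L^\infty(\tilde x;L^2(x_1))}\lesssim t^{-(d-1)/4}e^{-\theta t}\|f\|_{L^2}$, a one-dimensional Sobolev/GN interpolation in $x_1$ then supplies the remaining $t^{-1/4}$, $L^p$-interpolation in $p$ closes the $p$-range, and the $\tilde x$-derivatives are treated afterwards as a separate case using exactly the large-$|\xi|$ dissipation $e^{-\theta|\tilde\xi|^2t}$ that you fold into your unified $H^s$ estimate from the start. Your version is cleaner as a single statement and makes the source of each power of $t$ transparent, while the paper's mixed-norm approach hews more closely to the cylindrical $\RR\times\RR^{d-1}$ structure and never needs the fractional-order interpolation that your unified $H^s$ claim requires. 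One small bookkeeping point worth being honest about: your GN step for $p=\infty$ calls the $L^q$ bound at derivative order $m+1$, which at the very top of the range ($m=K$, $d=2$, so $q>2$ forced) nudges $s=m+1+d(\tfrac12-\tfrac1q)$ slightly past $K+1$; this is harmless given $K\ge[d/2]+4$ and the fact that the nonlinear iteration never uses that endpoint, but it is the kind of edge case the paper's $x_1$-only interpolation sidesteps automatically. Otherwise the argument is sound and follows the same underlying mechanism (uniform spectral gap for bounded $\tilde\xi$, quadratic dissipation for large $\tilde\xi$) as the paper.
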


\begin{proof}
The first inequalities follow immediately by (\ref{iso}) and (\ref{semigp}).
The second follows for $x_1$ derivatives in the case $p=\infty$, $m=0$ by Sobolev embedding from
$$
\|S^{II}(t) f\|_{L^\infty(\tilde x; L^2(x_1))}\le
Ct^{-\frac{d-1}{4}}e^{-\theta t}\|f\|_{L^2([0,X])}
$$
and
$$
\|\partial_{x_1} S^{II}(t) f\|_{L^\infty(\tilde x; L^2(x_1))}\le
Ct^{-\frac{d-1}{4} - \frac{1}{2}
}e^{-\theta t}\|f\|_{L^2([0,X])},
$$
which follow by an application of (\ref{iso}) in the $x_1$ variable and
the Hausdorff--Young inequality $\|f\|_{L^\infty(\tilde x)}\le
\|\hat f\|_{L^1(\tilde \xi)}$ in the variable $\tilde x$.  The result for derivatives in $x_1$
and general $2\le p\le \infty$ then follows by
$L^p$ interpolation.  Finally,
the result for derivatives in $\tilde{x}$ follows from the inverse Fourier transform,
equation \eqref{SII}, and the large $|\xi|$ bound
\[
|e^{Lt}f|_{L^2(x_1)}\leq e^{-\theta|\tilde{\xi}|^2 t}|f|_{L^2(x_1)},~|\xi|\textrm{ sufficiently large},
\]
which easily follows from Parseval and the fact that $L_{\xi}$ is a relatively compact perturbation
of $\partial_x^2-|\xi|^2$.  Thus, by the above estimate we have
\begin{align*}
\|e^{Lt}\partial_{\tilde{x}}f\|_{L^2(x)}&\leq C\|e^{L_{\xi}t}|\tilde{\xi}|\hat{f}\|_{L^2(x_1,\xi)}\\
&\leq C\sup\left(e^{-\theta|\tilde{\xi}|^2 t}|\xi|\right)\|\hat{f}\|_{L^2(x_1,\xi)}\\
&\leq Ct^{-1/2}\|f\|_{L^2(x)}.
\end{align*}
A similar argument applies for $1\le m\le K$.
\end{proof}

\subsection{Low-frequency bounds}\label{LF}
Denote by
\be\label{GI}
G^I(x,t;y):=S^I(t)\delta_y(x)
\ee
the Green kernel associated with $S^I$, and
\be\label{GIxi}
[G^I_\xi(x_1,t;y_1)]:=\phi(\xi)P(\xi) e^{L_\xi t}[\delta_{y_1}(x_1)]
\ee
the corresponding kernel appearing within the Bloch--Fourier representation
of $G^I$, where the brackets on $[G_\xi]$ and $[\delta_y]$
denote the periodic extensions of these functions onto the whole line.
Then, we have the following descriptions of $G^I$, $[G^I_\xi]$,
deriving from the
spectral expansion \eqref{e:surfaces} of $L_\xi$ near $\xi=0$.

\begin{proposition}[\cite{OZ4}]\label{kernels}
Under assumptions (H1)--(H3) and (D1)--(D3),
\ba\label{Gxi}
[G^I_\xi(x_1,t;y_1)]&= \phi(\xi)\sum_{j=1}^{n+1}e^{\lambda_j(\xi)t}
q_j(\xi,x_1)\tilde q_j(\xi, y_1)^*,\\
G^I(x,t;y)&=
\Big(\frac{1}{2\pi }\Big)^d \int_{\R^{d}} e^{i\xi \cdot (x-y)}
[G^I_\xi(x_1,t;y_1)] d\xi \\
&=
\Big(\frac{1}{2\pi }\Big)^d \int_{\R^{d}}
e^{i\xi \cdot (x-y)}
\phi(\xi)
\sum_{j=1}^{n+1}e^{\lambda_j(\xi)t} q_j(\xi,x_1)\tilde q_j(\xi, y_1)^*
d\xi,
\ea
where $*$ denotes matrix adjoint, or complex conjugate transpose,
$q_j(\xi,\cdot)$ and $\tilde q_j(\xi,\cdot)$
are right and left eigenfunctions of $L_\xi$ associated with eigenvalues
$\lambda_j(\xi)$ defined in \eqref{e:surfaces},
normalized so that $\langle \tilde q_j,q_j\rangle\equiv 1$, where
$\lambda_j/|\xi|$ is a smooth function of $|\xi|$ and $\hat \xi:=\xi/|\xi|$
and $q_j$ and $\tilde q_j$ are smooth functions of
$|\xi|$, $\hat \xi:=\xi/|\xi|$, and $x_1$ or $y_1$, with
$\Re \lambda_j(\xi)\le -\theta|\xi|^2$.
\end{proposition}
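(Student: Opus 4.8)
The plan is to read off \eqref{Gxi} directly from the definition \eqref{GIxi}, $[G^I_\xi] = \phi(\xi)P(\xi)e^{L_\xi t}[\delta_{y_1}]$, by decomposing the total eigenprojection $P(\xi)$ via Kato's perturbation theory, and then to recover $G^I$ from the inverse Bloch--Fourier representation \eqref{IBFT}. Recall that $P(\xi)$ is analytic in $\xi$ for $|\xi|$ small and, by (D3) together with continuity of spectral projections, of constant rank $n+1$. The first step is to split it into rank-one pieces. For each fixed $\hat\xi = \xi/|\xi|$ write $L_\xi = L_0 + |\xi|L^1 + |\xi|^2L^2$ and regard $|\xi|$, rather than $\xi$, as the perturbation parameter; since by (H3) the $n+1$ eigenvalues of $\Pi_0 L^1|_{\kernel L_0}$ are simple, the standard expansion produces $n+1$ eigenvalues $\lambda_j(\xi)$ with $\lambda_j(\xi)/|\xi|$, together with right and left eigenfunctions $q_j(\xi,x_1)$ and $\tilde q_j(\xi,y_1)$, depending smoothly (indeed analytically) on $(|\xi|,\hat\xi)$ in a full neighborhood of $|\xi| = 0$, with $\hat\xi$ entering as a smooth parameter; each $\lambda_j(\xi)$ is simple, hence semisimple, for $0 < |\xi|$ small. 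Smoothness in $x_1$ (resp.\ $y_1$) follows from elliptic regularity for the eigenvalue equation and (H1). Normalizing $\langle \tilde q_j, q_j\rangle \equiv 1$, the corresponding spectral projections are $P_j(\xi) = q_j(\xi,\cdot)\langle \tilde q_j(\xi,\cdot),\cdot\rangle$ with $P(\xi) = \sum_{j=1}^{n+1}P_j(\xi)$.

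The second step is the elementary spectral calculus. Since $\lambda_j(\xi)$ is semisimple, $L_\xi$ acts as the scalar $\lambda_j(\xi)$ on the one-dimensional space $\Range P_j(\xi)$, so $P(\xi)e^{L_\xi t} = \sum_{j=1}^{n+1}e^{\lambda_j(\xi)t}P_j(\xi)$. Applying this operator to $[\delta_{y_1}]$ and computing $P_j(\xi)[\delta_{y_1}](x_1) = q_j(\xi,x_1)\langle \tilde q_j(\xi,\cdot),[\delta_{y_1}]\rangle = q_j(\xi,x_1)\tilde q_j(\xi,y_1)^*$ yields the stated formula for $[G^I_\xi]$; the bound $\Re\lambda_j(\xi) \le -\theta|\xi|^2$ is exactly (D2), valid on the support of $\phi$ once $\eps$ is small. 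For $G^I$, observe that the Bloch transform of $\delta_y(\cdot)$ equals $e^{-i\xi\cdot y}[\delta_{y_1}(x_1)]$; substituting into \eqref{IBFT} with the low-frequency cutoff $\phi(\xi)P(\xi)$ inserted and using the formula just derived gives $G^I(x,t;y) = \big(\tfrac{1}{2\pi}\big)^d\int_{-\pi}^\pi\int_{\R^{d-1}}e^{i\xi\cdot(x-y)}[G^I_\xi(x_1,t;y_1)]\,d\xi_1\,d\tilde\xi$, and since $\phi$ is supported in $|\xi|\le 2\eps \ll \pi$ the $\xi_1$-integral may be extended to all of $\R$, producing the $\int_{\R^d}$ form of \eqref{Gxi}; absolute convergence is immediate because $\phi$ has compact support and $[G^I_\xi]$ is smooth in $x_1,y_1$, its range consisting of eigenfunctions.

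The one genuinely delicate point --- and the reason (H3) is needed --- is the behavior at $\xi = 0$: all $n+1$ surfaces $\lambda_j(\xi)$ collide there and the branches are only Lipschitz in $\xi$ (homogeneous degree one to leading order), so the individual $P_j(\xi)$ and $q_j,\tilde q_j$ do \emph{not} extend continuously to $\xi = 0$ as functions of $\xi$; only their sum $P(\xi)$ does. The device that salvages the statement is precisely the passage to the polar-type variables $(|\xi|,\hat\xi)$: along each fixed ray the eigenvalues separate already at order $|\xi|$, so by (H3) simplicity persists down to $|\xi| = 0$ and analytic dependence on $(|\xi|,\hat\xi)$ holds uniformly up to the origin. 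Checking that the $x_1$-regularity and the normalization $\langle\tilde q_j,q_j\rangle = 1$ can be arranged consistently in $(|\xi|,\hat\xi)$ is then routine. All remaining ingredients --- the semigroup identity on $\Range P_j(\xi)$, the distributional pairing with $[\delta_{y_1}]$, and the extension of the frequency domain --- are bookkeeping.
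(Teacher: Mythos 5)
Your proof is correct and follows essentially the same route as the paper: both pass to the polar variables $(|\xi|,\hat\xi)$ and invoke Kato's perturbation theory along rays, using (H3) to get simple splitting and hence smooth $\lambda_j,q_j,\tilde q_j$; both obtain the formula for $[G^I_\xi]$ from the rank-one spectral decomposition of $P(\xi)e^{L_\xi t}$; and both compute the Bloch transform of $\delta_y$ to be $e^{-i\xi\cdot y}[\delta_{y_1}]$ and then use the compact support of $\phi$ inside the Brillouin zone to extend the $\xi_1$-integral to all of $\R$. You are somewhat more explicit than the paper (which compresses the rank-one decomposition and semigroup step into a single remark), but the content and order of the argument coincide.
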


\begin{proof}
Smooth dependence of $\lambda_j$ and of $q$, $\tilde q$ as functions
in $L^2[0,X]$ follow from standard spectral perturbation theory
\cite{K} using the fact that $\lambda_j$ split to first order
in $|\xi|$ as $\xi$ is varied along rays through the origin,
and that $L_{\xi}$ varies smoothly with angle $\hat \xi$.
Smoothness of $q_j$, $\tilde q_j$ in $x_1$, $y_1$ then follow from
the fact that they satisfy the eigenvalue equation for $L_\xi$,
which has smooth, periodic coefficients.
Likewise, (\ref{Gxi})(i) is immediate from the spectral decomposition
of elliptic operators on finite domains.
Substituting (\ref{GI}) into (\ref{SI})
and computing
\be\label{comp1}
\widehat{\delta_y}(\xi,x_1)=
\sum_k e^{2\pi i kx_1}\widehat{\delta_y}(\xi + 2\pi k e_1)=
\sum_k e^{2\pi i kx_1}e^{-i\xi \cdot y-2\pi i ky_1}
= e^{-i\xi \cdot y}[\delta_{y_1}(x_1)],
\ee
where the second and third equalities follow from the fact that
the Fourier transform either continuous or discrete of
the delta-function is unity, we obtain
\ba\label{GIsub}
G^I(x,t;y)&=
\Big(\frac{1}{2\pi }\Big)^d \int_{-\pi}^{\pi}\int_{\R^{d-1}}
e^{i\xi \cdot x} \phi P(\xi) e^{L_\xi t} \widehat{\delta_y}(\xi,x_1)d\xi\\
\nonumber
&=
\Big(\frac{1}{2\pi }\Big)^d \int_{-\pi}^{\pi}\int_{\R^{d-1}}
e^{i\xi \cdot (x-y)}  \phi P(\xi)e^{L_\xi t} [\delta_{y_1}(x_1)] d\xi,
\ea
yielding (\ref{Gxi})(ii) by (\ref{GIxi})(i) and the fact that $\phi$
is supported on $[-\pi,\pi]$.
\end{proof}


\begin{proposition}[\cite{OZ4}] \label{Gbds}
Under assumptions (H1)-(H3) and (D1)-(D3),
\be\label{GIest}
\sup_{y}\|G^I(\cdot, t,;y) \|_{L^p(x)},
\;
\sup_{y}\|\partial_{x,y} G^I(\cdot, t,;y) \|_{L^p(x)}
 \le  C (1+t)^{-\frac{d}{2}(1-\frac{1}{p})}
\ee
for all $2 \le p \le \infty$, $t \ge 0 $, where $C>0$ is independent of $p$.
\end{proposition}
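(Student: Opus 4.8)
The plan is to use the spectral representation of Proposition~\ref{kernels} to reduce the bound to elementary Hausdorff--Young and interpolation estimates for inverse Fourier transforms of ``convected heat kernel'' symbols supported near $\xi=0$, followed by summation of a rapidly convergent series.

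First I would expand the periodic eigenfunctions in Fourier series in their periodic argument: by the uniform smoothness of $q_j(\xi,\cdot)$, $\tilde q_j(\xi,\cdot)$ on $\{|\xi|\le 2\eps\}\times[0,1]$ guaranteed by Proposition~\ref{kernels}, write $q_j(\xi,x_1)=\sum_k q_{j,k}(\xi)e^{2\pi i kx_1}$ and $\tilde q_j(\xi,y_1)^*=\sum_l \tilde q_{j,l}(\xi)e^{-2\pi i ly_1}$, where for every $N$
\[
\sup_{|\xi|\le 2\eps}\big(|q_{j,k}(\xi)|+|\tilde q_{j,l}(\xi)|\big)\le C_N(1+|k|+|l|)^{-N}.
\]
Substituting into \eqref{Gxi} and interchanging the (absolutely convergent) sums with the integral gives
\[
G^I(x,t;y)=\sum_{j=1}^{n+1}\sum_{k,l}e^{2\pi i(kx_1-ly_1)}\,\check m_{jkl}(x-y;t),\qquad
\check m_{jkl}(z;t):=\Big(\tfrac{1}{2\pi}\Big)^{d}\int_{\R^{d}}e^{i\xi\cdot z}\,m_{jkl}(\xi;t)\,d\xi,
\]
with scalar symbol $m_{jkl}(\xi;t):=\phi(\xi)e^{\lambda_j(\xi)t}q_{j,k}(\xi)\tilde q_{j,l}(\xi)$. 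Since $\phi$ is supported in $|\xi|\le 2\eps$ and $\Re\lambda_j(\xi)\le-\theta|\xi|^2$, one has $|m_{jkl}(\xi;t)|\le C_N(1+|k|+|l|)^{-N}e^{-\theta|\xi|^2 t}$ for $|\xi|\le 2\eps$, and $m_{jkl}\equiv 0$ otherwise.

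Next I would prove the two endpoint estimates for each $\check m_{jkl}$. From the symbol bound together with $\int_{|\xi|\le 2\eps}e^{-r\theta|\xi|^2 t}\,d\xi\le C_r(1+t)^{-d/2}$ for all $t\ge 0$ (bounded by the volume of the ball for $t\le1$, Gaussian decay for $t\ge 1$), the triangle inequality gives $\|\check m_{jkl}(\cdot;t)\|_{L^\infty(z)}\le C\|m_{jkl}(\cdot;t)\|_{L^1(\xi)}\le C_N(1+|k|+|l|)^{-N}(1+t)^{-d/2}$, while Plancherel gives $\|\check m_{jkl}(\cdot;t)\|_{L^2(z)}\le C\|m_{jkl}(\cdot;t)\|_{L^2(\xi)}\le C_N(1+|k|+|l|)^{-N}(1+t)^{-d/4}$. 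Interpolating by the elementary Hölder inequality $\|g\|_{L^p}\le\|g\|_{L^\infty}^{1-2/p}\|g\|_{L^2}^{2/p}$ (constant $1$) yields, for every $2\le p\le\infty$,
\[
\|\check m_{jkl}(\cdot;t)\|_{L^p(z)}\le C_N(1+|k|+|l|)^{-N}(1+t)^{-\frac{d}{2}(1-1/p)},
\]
with $C_N$ independent of $p$. Since $|e^{2\pi i(kx_1-ly_1)}|=1$ and $L^p$ norms are translation invariant, the $L^p(x)$ norm of the $(j,k,l)$ term of $G^I$ equals $\|\check m_{jkl}(\cdot;t)\|_{L^p(z)}$; summing over the $n+1$ values of $j$ and over $k,l\in\ZZ$ with, say, $N=d+2$ yields the first bound in \eqref{GIest}, uniformly in $y$ and in $p$.

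Finally, for the derivative bounds: a $\partial_{x_1}$ (resp.\ $\partial_{y_1}$) falling on $e^{2\pi i(kx_1-ly_1)}$ produces a factor $2\pi i k$ (resp.\ $-2\pi i l$), absorbed by decreasing $N$ by one, whereas a $\partial_{x_1},\partial_{y_1},\partial_{\tilde x}$ or $\partial_{\tilde y}$ falling on $\check m_{jkl}$ replaces the symbol $m_{jkl}$ by $\pm i\xi_r m_{jkl}$, and $|\xi_r|\le 2\eps$ on $\mathrm{supp}\,\phi$, so the identical estimates hold up to constants; this gives the second bound in \eqref{GIest}. No step here is genuinely hard; the one point requiring care is the uniform-in-$\xi$ $C^\infty$ regularity of $q_j,\tilde q_j$ on $\{|\xi|\le 2\eps\}\times[0,1]$ --- equivalently the uniform rapid decay of the coefficients $q_{j,k},\tilde q_{j,l}$ exploited above --- but that is exactly what Proposition~\ref{kernels} provides, via analytic spectral perturbation theory in the blown-up variables $(|\xi|,\hat\xi)$ together with smoothness of the coefficients of $L_\xi$ in $x_1$.
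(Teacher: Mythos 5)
Your proof is correct, but it takes a genuinely different route from the paper's. The paper's argument works with the representation \eqref{Gxi}(ii) directly: for $p=\infty$ it applies the triangle inequality to get $\|G^I\|_{L^\infty(x,y)}\le C\|e^{-\theta|\xi|^2t}\phi\|_{L^1(\xi)}$, and for $p=2$ it reinterprets \eqref{Gxi}(ii), for each fixed $y$, as a Bloch--Fourier decomposition in the variable $z=x-y$ and invokes the isometry \eqref{iso} together with boundedness of $q_j$ in $L^2$ and $\tilde q_j$ in $L^\infty$; interpolation then closes the argument. You instead Fourier-expand the periodic eigenfunctions $q_j(\xi,\cdot)$, $\tilde q_j(\xi,\cdot)^*$ in $x_1$, $y_1$, reducing $G^I$ to a rapidly convergent sum of terms $e^{2\pi i(kx_1-ly_1)}\check m_{jkl}(x-y;t)$ that are genuine convolution kernels on $\R^d$, then get the $p=\infty$ and $p=2$ endpoints from Hausdorff--Young and Plancherel and sum the series. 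This sidesteps the Bloch--Fourier isometry \eqref{iso} entirely in favor of ordinary Fourier analysis, at the modest price of justifying the term-by-term estimate and the summation; it is in that sense more elementary, and makes explicit (via the Hölder interpolation with constant one) why $C$ can be taken independent of $p$, a point the paper leaves implicit. Both approaches rely on the same structural inputs: the uniform smoothness of $q_j,\tilde q_j$ in $x_1$ on $\{|\xi|\le 2\eps\}\times S^{d-1}$, compact support of $\phi$, and $\Re\lambda_j\le-\theta|\xi|^2$; and both quite deliberately avoid needing smoothness of the symbol in $\xi$ at $\xi=0$, which fails here (cf.\ Remark~\ref{nonsmooth}). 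One small slip: summability of $\sum_{k,l\in\ZZ}(1+|k|+|l|)^{-N}$ needs only $N>2$ regardless of $d$, so $N=3$ suffices; the dependence on $d$ you suggest is unnecessary.
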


\begin{proof}
From representation (\ref{Gxi})(ii) and $\Re \lambda_j(\xi)\le -\theta |\xi|^2$,
we obtain by the triangle inequality
\be
\|G^I\|_{L^\infty(x,y)}\le C\|e^{-\theta |\xi|^2 t} \phi(\xi)\|_{L^1(\xi)}
 \le  C (1+t)^{-\frac{d}{2}},
\ee
verifying the bounds for $p=\infty$.  Derivative bounds follow similarly,
since derivatives falling on $q_j$ or $\tilde q_j$ are harmless, whereas
derivatives falling on $e^{i\xi\cdot(x-y)}$ bring down a factor
of $\xi$, again harmless because of the cutoff function $\phi$.

To obtain bounds for $p=2$, we note that (\ref{Gxi})(ii) may be viewed
itself as a Bloch--Fourier decomposition with respect to variable
$z:=x-y$, with $y$ appearing as a parameter.
Recalling (\ref{iso}), we may thus estimate
\ba
\sup_y \|G^I(x,t;y)\|_{L^2(x)}&=
\sum_j \sup_y \|\phi(\xi) e^{\lambda_j(\xi)t}
q_j(\cdot, z_1)\tilde q_j^*(\cdot, y_1)\|_{L^2(\xi; L^2(z_1\in [0,X]))}\\
&\le
C\sum_j \sup_y \|\phi(\xi) e^{-\theta |\xi|^2t} \|_{L^2(\xi)}
\|q_j\|_{L^2(0,X)} \|\tilde q_j\|_{L^\infty(0,X)}
\\
&\le
 C (1+t)^{-\frac{d}{4}},
\ea
where we have used in a crucial way the boundedness of $\tilde q_j$;
derivative bounds follow similarly.
Finally, bounds for $2\le p\le \infty$ follow by $L^p$-interpolation.
\end{proof}

\begin{remark}\label{nonsmooth}
\textup{
In obtaining the key $L^2$-estimate, we have used in an essential
way the periodic structure of $q_j$, $\tilde q_j$.  For, viewing
$G^I$ as a general pseudodifferential expression rather than
a Bloch--Fourier decomposition, we find that the smoothness of
$q_j$, $\tilde q_j$ is not sufficient to apply standard $L^2\to L^2$
bounds of H\"ormander, which require blowup in $\xi$ derivatives
at less than the critical rate $|\xi|^{-1}$ found here; see,
e.g., \cite{H} for further discussion.
Nor do the weighted energy estimate techniques used in
\cite{S1,S2,S3} apply here, as these also rely on the property
of smoothness of $\lambda_j$, $q_j$, $\tilde q_j$ with respect
to $\xi$ at the origin $\xi=0$.
The lack of smoothness of the linearized dispersion relation
at the origin is an essential technical difference separating the
conservation law from the reaction diffusion case; see
\cite{OZ4} for further discussion.
}
\end{remark}

\begin{remark}\label{greenformula}
\textup{
Underlying the above analysis,
and also the technically rather different approach of \cite{OZ2},
is the fundamental relation
\be\label{greenform}
G(x,t;y)=
\Big(\frac{1}{2\pi }\Big)^d \int_{-\pi}^{\pi}\int_{\R^{d-1}}
e^{i\xi \cdot (x-y)}[G_\xi(x_1,t;y_1)]d\xi
\ee
which, provided $\sigma(L_\xi)$ is semisimple, yields the simple
formula
$$
G(x,t;y)=
\Big(\frac{1}{2\pi }\Big)^d \int_{-\pi}^{\pi}\int_{\R^{d-1}}
e^{i\xi \cdot (x-y)}\sum_j e^{\lambda_j(\xi)t}q_j(\xi,x_1)
\tilde q_j(\xi, y_1)^* d\xi
$$
resembling that of the constant-coefficient case,
where $\lambda_j$ runs through the spectrum of $L_\xi$.
The basic idea in both cases is to separate off the
principal part of the series involving small $\lambda_j(\xi)$
and estimate the remainder as a faster-decaying residual.
}
\end{remark}

\begin{corollary}[\cite{OZ4}]\label{Sbd}
Under assumptions (H1)--(H3) and (D1)--(D3),
for all $p\ge 2$, $t\ge 0$,
\ba\label{SIest}
\|S^I(t)f\|_{L^p},\;
\|\partial_x S^I(t)f\|_{L^p}, \;
\|S^I(t) \partial_x f\|_{L^p}
 &\le&  C (1+t)^{-\frac{d}{2}(1-\frac{1}{p})}\|f\|_{L^1}.
\ea
\end{corollary}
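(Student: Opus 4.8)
The plan is to deduce Corollary~\ref{Sbd} from the Green kernel bounds of Proposition~\ref{Gbds} by the standard Young-type convolution argument adapted to the Bloch--Fourier setting. First I would write, for arbitrary $f\in L^1(x)$,
\[
S^I(t)f(x)=\int_{\R^d}G^I(x,t;y)f(y)\,dy,
\]
which is justified since $G^I(\cdot,t;\cdot)$ is (by the explicit representation \eqref{Gxi}(ii) and smoothness/decay of the $q_j$, $\tilde q_j$) a genuine integral kernel, bounded and continuous in $(x,y)$ for $t>0$. From here the $L^\infty$ bound is immediate: $\|S^I(t)f\|_{L^\infty(x)}\le \sup_{x,y}|G^I(x,t;y)|\,\|f\|_{L^1}\le C(1+t)^{-\frac d2}\|f\|_{L^1}$ by the $p=\infty$ case of \eqref{GIest}.

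Next I would establish the $L^2$ endpoint. Here the subtlety is that $G^I(x,t;y)$ is not a convolution kernel in $x-y$ alone — it depends on $x_1$ and $y_1$ separately through the periodic eigenfunctions — so ordinary Young's inequality does not literally apply. However, Proposition~\ref{Gbds} already supplies the uniform-in-$y$ bound $\sup_y\|G^I(\cdot,t;y)\|_{L^2(x)}\le C(1+t)^{-d/4}$, and by the adjoint symmetry of the representation (exchanging the roles of $q_j$ and $\tilde q_j^*$, which are both smooth, periodic, and bounded) one also has $\sup_x\|G^I(x,t;\cdot)\|_{L^2(y)}\le C(1+t)^{-d/4}$; alternatively one may simply invoke the $p=2$ bound of \eqref{GIest} applied to the adjoint kernel. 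With a one-sided uniform $L^2$ bound in hand — $\sup_y\|G^I(\cdot,t;y)\|_{L^2(x)}\le C(1+t)^{-d/4}$ — Minkowski's integral inequality gives
\[
\|S^I(t)f\|_{L^2(x)}\le \int_{\R^d}\|G^I(\cdot,t;y)\|_{L^2(x)}\,|f(y)|\,dy
\le C(1+t)^{-\frac d4}\|f\|_{L^1}.
\]
The intermediate cases $2\le p\le\infty$ then follow by $L^p$-interpolation between the $L^2$ and $L^\infty$ estimates, giving the stated rate $(1+t)^{-\frac d2(1-\frac1p)}$.

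For the derivative bounds I would observe that both $\partial_x S^I(t)f$ and $S^I(t)\partial_x f$ are again integral operators against kernels controlled by Proposition~\ref{Gbds}: in the first case the kernel is $\partial_x G^I(x,t;y)$, in the second, after an integration by parts moving the total $x$-derivative onto $G^I$ (using that $\partial_x$ here is the \emph{total} derivative and that $\partial_x G^I = -\partial_y G^I$ up to the obvious sign bookkeeping coming from the $e^{i\xi\cdot(x-y)}$ factor, which is exactly the content of the $\partial_{x,y}$ bound in \eqref{GIest}), the kernel is $\partial_y G^I(x,t;y)$. In either case \eqref{GIest} furnishes $L^\infty$ and $L^2$ bounds on $\sup_y\|\partial_{x,y}G^I(\cdot,t;y)\|_{L^p(x)}$ with the same rate $(1+t)^{-\frac d2(1-\frac1p)}$, and repeating the Minkowski/interpolation argument above closes the estimate. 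The main (and only) obstacle worth flagging is the non-convolution structure in the $x_1$ direction, which forces the use of the uniform-in-$y$ $L^p_x$ bounds of Proposition~\ref{Gbds} together with Minkowski's inequality rather than Young's inequality; once that is noticed the argument is entirely routine.
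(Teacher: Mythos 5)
Your proposal is correct and takes essentially the same approach as the paper: both reduce the corollary to the uniform-in-$y$ bound $\sup_y\|G^I(\cdot,t;y)\|_{L^p(x)}\le C(1+t)^{-\frac d2(1-1/p)}$ of Proposition~\ref{Gbds} and apply Minkowski's integral inequality (the paper calls it the triangle inequality). The only difference is that you re-derive the intermediate $p$ by interpolating between $p=2$ and $p=\infty$, which is unnecessary since Proposition~\ref{Gbds} already provides the uniform bound for all $2\le p\le\infty$, so the Minkowski step alone closes the argument in one line.
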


\begin{proof}
Immediate, from (\ref{GIest}) and the triangle inequality,
as, for example,
$$
\|S^I(t)f(\cdot )\|_{L^p}=
\Big\|\int_{\R^d}G^I(x,t;y)f(y)dy\Big\|_{L^p(x)}
\le
\int_{\R^d}\sup_y \|G^I(\cdot ,t;y)\|_{L^p}|f(y)|dy.
$$
\end{proof}

\begin{prop}[\cite{OZ4}]\label{p:linstab}
Assuming (H1)-(H3), (D1)-(D3), for some $C>0$, all $t\ge 0$, $p\ge 2$,
$0\le l\le K$,
\ba\label{Sbound}
\|S(t)\partial_{x}^l u_0\|_{L^p} &\le
C
t^{-\frac{l}{2}}
(1+t)^{-\frac{d}{2}(\frac{1}{2}-\frac{1}{p}) + \frac{l}{2}}
t^{-\frac{d}{4} - \frac{l}{2}}
\|u_0 \|_{L^1\cap L^2}.
\ea
\end{prop}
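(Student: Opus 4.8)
The plan is to combine the high-frequency estimate of Proposition~\ref{p:hf} with the low-frequency estimates of Corollary~\ref{Sbd} and Proposition~\ref{Gbds}, splitting $S(t) = S^I(t) + S^{II}(t)$ and reading off the claimed bound term by term. The peculiar-looking right-hand side of \eqref{Sbound} should be interpreted as a \emph{pointwise minimum}: it is really the statement that $\|S(t)\partial_x^l u_0\|_{L^p}$ is bounded by the minimum of $Ct^{-l/2}e^{-\theta t}\|u_0\|_{L^2}$ (valid for all $t>0$ from the high-frequency part and for small $t$ from parabolic smoothing) and $C(1+t)^{-\frac{d}{2}(\frac12-\frac1p)-\frac{l}{2}}\|u_0\|_{L^1\cap L^2}$ (valid for bounded-away-from-zero $t$ from the low-frequency part), and the product written in the statement is simply a convenient single expression dominating this minimum. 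So the first step is to verify this reading and observe that it suffices to bound each of $S^I$ and $S^{II}$ separately on the two time regimes $0<t\le 1$ and $t\ge 1$.

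For $t\ge 1$, I would estimate the high-frequency part $S^{II}(t)\partial_x^l u_0$ directly by the second inequality of \eqref{SIIest} with $p=2$ (or with general $p$ via the stated $L^p$ bound), giving a bound $Ct^{-l/2}e^{-\theta t}\|u_0\|_{L^2}$, which decays faster than any polynomial and is therefore absorbed into the claimed rate. The low-frequency part $S^I(t)\partial_x^l u_0$ is handled by Corollary~\ref{Sbd}: writing $\partial_x^l = \partial_x^{l-1}\partial_x$ and moving one derivative onto the Green kernel, one applies the derivative form of \eqref{SIest} repeatedly (equivalently, one differentiates the Bloch representation \eqref{Gxi}(ii), noting that derivatives falling on $q_j,\tilde q_j$ are harmless and derivatives on $e^{i\xi\cdot(x-y)}$ produce factors of $\xi$ controlled by the cutoff and by $\Re\lambda_j(\xi)\le-\theta|\xi|^2$, which contributes the extra $(1+t)^{-l/2}$). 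This yields $C(1+t)^{-\frac{d}{2}(1-\frac1p)-\frac{l}{2}}\|u_0\|_{L^1}$; combining the two and using that for $t\ge 1$ one has $(1+t)^{-\frac{d}{2}(1-\frac1p)}\sim (1+t)^{-\frac{d}{2}(\frac12-\frac1p)}(1+t)^{-\frac{d}{4}}$ matches the asserted product.

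For $0<t\le 1$, decay in $t$ is irrelevant and one only needs the $t^{-l/2}$ smoothing factor together with boundedness of the remaining factors. Here the high-frequency bound \eqref{SIIest} again gives $Ct^{-l/2}\|u_0\|_{L^2}$ directly. For the low-frequency part, since $\phi$ is compactly supported and $e^{L_\xi t}$ is uniformly bounded on $0\le t\le 1$, Proposition~\ref{Gbds} gives $\|\partial_x^l G^I(\cdot,t;y)\|_{L^p(x)}\le C$ uniformly (the extra $x$-derivatives cost nothing since they land on the smooth, compactly-$\xi$-supported integrand), hence $\|S^I(t)\partial_x^l u_0\|_{L^p}\le C\|u_0\|_{L^1}\le Ct^{-l/2}\|u_0\|_{L^1}$. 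Putting the two time regimes together gives \eqref{Sbound}.

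The main obstacle is purely bookkeeping: one must check that the single product expression on the right of \eqref{Sbound} genuinely dominates the piecewise bounds in all the relevant ranges of $l$, $p$, $d$, and $t$ — in particular that the interpolation of the high-frequency $L^2\to L^p$ bound and the low-frequency $L^1\to L^p$ bound does not lose a power of $t$ at the crossover $t\sim 1$, and that moving $l$ derivatives through the low-frequency Green kernel really does produce exactly the factor $(1+t)^{-l/2}$ and no worse, which relies on the uniform lower bound $\Re\lambda_j(\xi)\le-\theta|\xi|^2$ from (D2)--(D3) rather than on any smoothness of $\lambda_j$ at the origin. None of these steps is conceptually hard given Propositions~\ref{p:hf}, \ref{kernels}, \ref{Gbds} and Corollary~\ref{Sbd}; the proof is essentially a reorganization of those estimates, which is why it is attributed to \cite{OZ4}.
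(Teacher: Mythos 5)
Your overall strategy --- split $S = S^I + S^{II}$ and combine the low-frequency bound of Corollary~\ref{Sbd} with the high-frequency bound of Proposition~\ref{p:hf} --- is the same as the paper's, whose proof is the one-liner ``Immediate, from (\ref{SIIest}) and (\ref{SIest}).'' But two steps in your proposal are wrong, and the second one is not just bookkeeping.

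First, the ``pointwise minimum'' reading is conceptually off. Since $S = S^I + S^{II}$, the two estimates are combined by the triangle inequality, i.e.\ as a sum (or, up to a constant, a maximum), not a minimum. The expression $Ct^{-l/2}e^{-\theta t}\|u_0\|_{L^2}$ is a bound on the high-frequency piece $S^{II}$ alone; it cannot bound the full $S$ at large $t$, because $S^I$ carries a polynomially decaying tail.

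Second, and more seriously, you assert that moving $l$ derivatives through the low-frequency kernel produces an extra factor $(1+t)^{-l/2}$, ``relying on $\Re\lambda_j(\xi)\le -\theta|\xi|^2$.'' This is false in the periodic setting. In the Bloch representation \eqref{Gxi}, a spatial derivative distributes between the factor $e^{i\xi\cdot(x-y)}$ and the periodic eigenfunctions $q_j(\xi,x_1)$, $\tilde q_j(\xi,y_1)$. The terms where the derivative falls on the exponential do pick up a factor $|\xi|$ and hence extra time decay, but the terms where it falls on $q_j$ or $\tilde q_j$ pick up factors that are merely bounded, not small. The overall bound is governed by the slowest-decaying contribution, so $\|\partial_x^l S^I(t)f\|_{L^p}$ decays \emph{no faster} than $\|S^I(t)f\|_{L^p}$. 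This is exactly what Corollary~\ref{Sbd} records (the same $(1+t)^{-\frac{d}{2}(1-1/p)}$ with or without the derivative), and it is the central obstruction announced at the start of Section~\ref{s:refined}: ``the full solution operator $|S(t)\partial_x|$ decays no faster than $S(t)$, or, equivalently, $G_y$ no faster than $G$.'' If your extra $(1+t)^{-l/2}$ at the low-frequency level were available, the refined decomposition of Section~\ref{s:refined} and the whole modulation/cancellation argument would be unnecessary.

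You may have been led astray by a typographical artifact in the statement: the displayed right-hand side of \eqref{Sbound} carries a stray leading factor $t^{-l/2}$. Comparing with Proposition~\ref{p:addlinstab} at $q=1$, the expression that genuinely follows from \eqref{SIIest} and \eqref{SIest} is $C(1+t)^{-\frac{d}{2}(\frac12-\frac1p)+\frac{l}{2}}\,t^{-\frac{d}{4}-\frac{l}{2}}\|u_0\|_{L^1\cap L^2}$, which for $t\ge 1$ gives $t^{-\frac{d}{2}(1-\frac1p)}$ with no gain in $l$; the extra $t^{-l/2}$ in the printed formula would demand $l/2$ more decay at large $t$ than $S^I$ actually provides, and so cannot be ``immediate'' from the cited bounds.
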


\begin{proof}
Immediate, from (\ref{SIIest}) and (\ref{SIest}).
\end{proof}

\subsection{Additional estimates}\label{s:additional}

\begin{lemma}\label{l:additional}
Assuming (H1)--(H3), (D1)--(D3), for all $t\ge 0$, $0\le l\le K$,
\be\label{nottriv}
\|\partial_x^l S^I(t)f\|_{L^p(x)},\;
\|S^I(t)\partial_x^l f\|_{L^p(x)}
\le  C(1+t)^{-\frac{d}{2}(1/2-1/p)}\|f\|_{L^2(x)}.
\ee
\end{lemma}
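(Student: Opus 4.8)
The plan is to prove Lemma~\ref{l:additional} by interpolating between the $L^2\to L^2$ and $L^1\to L^\infty$ endpoints already established, exactly as in the proof of Corollary~\ref{Sbd}, but tracking the effect of the $x$-derivatives through the Bloch--Fourier representation. First I would record the two extreme cases. For $p=\infty$ we already have from the Green kernel bound \eqref{GIest} that $\|\partial_x^l S^I(t)f\|_{L^\infty(x)}\le \sup_y\|\partial_x^l G^I(\cdot,t;y)\|_{L^\infty(x)}\|f\|_{L^1}$ with decay $(1+t)^{-d/2}$, and dualizing/commuting derivatives gives the same for $\|S^I(t)\partial_x^l f\|_{L^\infty}$; but since the present lemma posits data in $L^2$ rather than $L^1$, the relevant endpoint is instead $p=2$. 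For $p=2$ I would argue directly from the spectral representation \eqref{Gxi}(i): writing $\partial_x^l S^I(t)f$ via the inverse Bloch--Fourier transform \eqref{IBFT} with symbol $\phi(\xi)P(\xi)e^{L_\xi t}$, any $x_1$-derivative falling on $q_j(\xi,x_1)$ is harmless by smoothness and periodicity of the eigenfunctions, while any $\tilde x$-derivative or derivative hitting the phase $e^{i\xi\cdot x}$ brings down a factor $|\xi|\lesssim 1$ (cutoff support), and any $x_1$-derivative can likewise be traded, via the eigenvalue equation for $L_\xi$, for lower-order terms plus factors of $|\xi|$. Hence the symbol is bounded by $C\phi(\xi)e^{-\theta|\xi|^2 t}$ uniformly, and by the Parseval isometry \eqref{iso} (applied in the variable $z=x-y$ as in Proposition~\ref{Gbds}) together with the boundedness of $\|q_j\|_{L^2(0,X)}$ and $\|\tilde q_j\|_{L^\infty(0,X)}$ we get
\[
\|\partial_x^l S^I(t)f\|_{L^2(x)}\le C\|\phi(\xi)e^{-\theta|\xi|^2 t}\|_{L^\infty(\xi)}\|f\|_{L^2(x)}\le C\|f\|_{L^2(x)},
\]
which is the claimed bound at $p=2$ since $-\tfrac d2(1/2-1/2)=0$. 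The same estimate for $S^I(t)\partial_x^l f$ follows by moving the derivatives onto $\tilde q_j(\xi,y_1)^*$ (harmless) or onto the phase (factor $|\xi|$).

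Next I would upgrade the $p=\infty$ endpoint to one compatible with $L^2$ data. The natural intermediate statement is $\|\partial_x^l S^I(t)f\|_{L^\infty(x)}\le C(1+t)^{-d/4}\|f\|_{L^2(x)}$, which is precisely the $p=\infty$ case of \eqref{nottriv}. This I would obtain exactly as in the $p=2$ computation above but now using $\|\phi(\xi)e^{-\theta|\xi|^2 t}\|_{L^2(\xi)}\le C(1+t)^{-d/4}$ in place of the $L^\infty(\xi)$ bound, after pulling the $L^\infty_x$ norm inside via Hausdorff--Young $\|g\|_{L^\infty(x)}\le\|\hat g\|_{L^1(\xi)}$ in the full $x$-variable and Cauchy--Schwarz in $\xi$ against $\phi(\xi)$: schematically, $\|\partial_x^l S^I(t)f\|_{L^\infty(x)}\lesssim \|\phi(\xi)e^{-\theta|\xi|^2 t}\hat f(\xi,x_1)\|_{L^1(\xi;L^\infty(x_1))}\lesssim \|\phi e^{-\theta|\xi|^2 t}\|_{L^2(\xi)}\|\hat f\|_{L^2(\xi;L^\infty(x_1))}$, and $\|\hat f\|_{L^2(\xi;L^\infty(x_1))}\lesssim\|f\|_{L^2(x)}$ by the periodic-in-$x_1$ structure and Sobolev embedding in $x_1$ absorbed into $K$. (Alternatively one can deduce this endpoint directly from Proposition~\ref{p:linstab} with $l=0$ combined with the derivative-absorption argument, but the direct route above is cleaner.)

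Finally, $L^p$-interpolation between the $p=2$ bound $(1+t)^{0}$ and the $p=\infty$ bound $(1+t)^{-d/4}$ gives $\|\partial_x^l S^I(t)f\|_{L^p(x)}\le C(1+t)^{-\frac d4\cdot\frac{1-2/p}{1}}\|f\|_{L^2(x)}$; and since $\tfrac d2(1/2-1/p)=\tfrac d4(1-2/p)$ this is exactly \eqref{nottriv}. The argument for $S^I(t)\partial_x^l f$ is identical. The main obstacle—and the only genuinely non-routine point—is the $L^2\to L^2$ endpoint: one must verify carefully that no $x_1$-derivative produces an uncancelled factor worse than $O(|\xi|^0)$, which works here precisely because of the smoothness and periodicity of the Bloch eigenfunctions $q_j,\tilde q_j$ (this is the same delicate point flagged in Remark~\ref{nonsmooth}), and because the low-frequency cutoff $\phi$ confines $|\xi|$ to a bounded set so that the $|\xi|$-factors from $\tilde x$-derivatives and phase derivatives are uniformly bounded. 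Everything else is Hausdorff--Young, Parseval, and interpolation.
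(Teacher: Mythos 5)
Your $p=2$ endpoint and the $L^p$-interpolation are sound and match the paper's argument, but your $p=\infty$ endpoint has a genuine gap. You reduce to the claim
\[
\|\hat f\|_{L^2(\xi;\,L^\infty(x_1))}\lesssim\|f\|_{L^2(x)},
\]
invoking "periodic-in-$x_1$ structure and Sobolev embedding in $x_1$ absorbed into $K$." That inequality is false for general $f\in L^2$: Parseval gives $\|f\|_{L^2(x)}=\|\hat f\|_{L^2(\xi;\,L^2(x_1))}$, and $L^\infty(0,X)$ is strictly stronger than $L^2(0,X)$; Sobolev embedding would require $\hat f(\xi,\cdot)\in H^{1/2+\epsilon}(x_1)$, i.e.\ extra derivatives on $f$, whereas the lemma's right-hand side has only $\|f\|_{L^2}$. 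There is no room to "absorb into $K$" because $K$ does not appear on the right of \eqref{nottriv}.

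The paper avoids this by never passing $\hat f$ through an $L^\infty(x_1)$ norm. The smoothing that converts $L^2(x_1)$ into $L^\infty(x_1)$ must come from the spectral projection $P(\xi)=\sum_j q_j\langle\tilde q_j,\cdot\rangle$, whose range is the finite-dimensional span of the eigenfunctions $q_j(\xi,\cdot)$, which are uniformly bounded in $L^\infty(x_1)$. This yields the key estimate
\[
\bigl|\phi(\xi)P(\xi)e^{L_\xi t}\hat f(\xi,\cdot)\bigr|_{L^\infty(x_1)}
\le C\,e^{-\theta|\xi|^2 t}\,\bigl|\hat f(\xi,\cdot)\bigr|_{L^2(x_1)},
\]
and only then does one apply Cauchy--Schwarz in $\xi$ against $\phi\,e^{-\theta|\xi|^2 t}\in L^2(\xi)$ and Parseval to land on $\|f\|_{L^2(x)}$. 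Your schematic bound drops $P(\xi)$ before taking the $L^\infty(x_1)$ norm, which is precisely where the argument breaks. Restoring the projector and using $|q_j|_{L^\infty},\,|\tilde q_j|_{L^2}\le C$ to bound $|P(\xi)g|_{L^\infty(x_1)}\le C|g|_{L^2(x_1)}$ repairs the $p=\infty$ case; the rest of your argument then goes through.
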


\begin{proof}
From boundedness of the spectral projections $P_j(\xi)=
q_j \langle \tilde q_j, \cdot\rangle$ in $L^2[0,X]$ and their derivatives,
another consequence of first-order splitting of eigenvalues
$\lambda_j(\xi)$ at the origin, we obtain
boundedness of $\phi(\xi) P(\xi)e^{L_\xi t}$
and thus, by (\ref{iso}), the global bounds
\be\label{triv}
\|\partial_x^l S^I(t)f\|_{L^2(x)},\;
\|S^I(t)\partial_x^l f\|_{L^2(x)}
\le  C\|f\|_{L^2(x)},
\ee
for all $t\ge 0$, yielding the result for $p=2$.
Moreover, by boundedness of $\tilde q$, $q$ in all $L^p(x_1)$,
we have
$$
\begin{aligned}
|\phi(\xi) P(\xi)e^{L_\xi t}\hat f(\xi, \cdot)|_{L^\infty(x_1)}
&\le Ce^{-\theta |\xi|^2t} |P(\xi) \hat f(\xi,\cdot)|_{L^\infty(x_1)}
\le Ce^{-\theta |\xi|^2t} |\hat f(\xi,\cdot)|_{L^2(x_1)},
\end{aligned}
$$
$C,\, \theta>0$, yielding by $S^If=
\Big(\frac{1}{2\pi }\Big)^d \int_{-\pi}^{\pi}\int_{\R^{d-1}}
e^{i\xi \cdot x}\phi(\xi)P(\xi) e^{L_\xi t}\hat f(\xi, x_1)
d\xi_1\, d\tilde \xi$
the bound
\ba\label{last}
\|S^{I}(t) f\|_{L^\infty(x)} &\le
\Big(\frac{1}{2\pi }\Big)^d \int_{-\pi}^{\pi}\int_{\R^{d-1}}
|\phi(\xi)P(\xi) e^{L_\xi t}\hat f(\xi, \cdot)|_{L^\infty(x_1)}
d\xi_1\, d\tilde \xi \\
&\le
\Big(\frac{1}{2\pi }\Big)^d \int_{-\pi}^{\pi}\int_{\R^{d-1}}
C\phi(\xi) e^{-\theta |\xi|^2t}|\hat f(\xi,\cdot)|_{L^2(x_1)}
d\xi_1\, d\tilde \xi \\
&\le C|\phi(\xi) e^{-\theta |\xi|^2t}|_{L^2(\xi)} |\hat f|_{L^2(\xi, x_1)}\\
&= C(1+t)^{-\frac{d}{4} } \|f\|_{L^2([0,X])},
\ea
yielding the result for $p=\infty$, $l=0$.
The result for $p=\infty$, $1\le l\le K$ follows by
a similar argument.
The result for general $2\le p\le \infty$ then follows by
$L^p$ interpolation between $p=2$ and $p=\infty$.
\end{proof}

By Riesz--Thorin interpolation between (\ref{nottriv}) and (\ref{SIest}),
we obtain the following, apparently sharp bounds between various
$L^q$ and $L^p$.\footnote{
The inclusion of general $p\ge 2$ in Lemma \ref{l:additional}
repairs an omission in \cite{OZ4}, where the
bounds \eqref{RT} were stated but not used.}

\begin{corollary}\label{RT}
Assuming (H0)--(H3) and (D1)--(D3),
for all $1\le q\le 2\le p$, $t\ge 0$,
$0\le l\le K$,
\ba\label{RTest}
\|\partial_x^l S^I(t)f\|_{L^p}, \;
\|S^I(t) \partial_x^l f\|_{L^p}
 &\le  C (1+t)^{-\frac{d}{2}(\frac{1}{q}-\frac{1}{p})}\|f\|_{L^q}.
\ea
\end{corollary}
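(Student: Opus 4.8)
The plan is to obtain \eqref{RTest} by a single application of the Riesz--Thorin interpolation theorem, applied separately for each fixed $p\ge 2$, each $0\le l\le K$, and each $t\ge 0$, to the linear operator $T_t:=\partial_x^l S^I(t)$ (respectively $S^I(t)\partial_x^l$), regarded as a map into $L^p(x)$. The two endpoint bounds are already in hand: Corollary \ref{Sbd} supplies
\[
\|T_t\|_{L^1(x)\to L^p(x)}\le C_0\,(1+t)^{-\frac{d}{2}(1-\frac1p)},
\]
and Lemma \ref{l:additional} (estimate \eqref{nottriv}) supplies
\[
\|T_t\|_{L^2(x)\to L^p(x)}\le C_1\,(1+t)^{-\frac{d}{2}(\frac12-\frac1p)},
\]
with $C_0,C_1$ independent of $p$. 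Since the target exponent is $p$ in both endpoint estimates, interpolation acts on the domain exponent alone.

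Given $1\le q\le 2$, I would choose $\theta\in[0,1]$ by $\frac1q=\frac{1-\theta}{1}+\frac{\theta}{2}$, i.e.\ $\theta=2-\frac2q$, so that $\theta=0$ recovers the $L^1\to L^p$ bound and $\theta=1$ the $L^2\to L^p$ bound. Riesz--Thorin then gives $T_t:L^q(x)\to L^p(x)$ with operator norm at most
\[
C_0^{1-\theta}C_1^{\theta}\,(1+t)^{-\frac{d}{2}\left[(1-\frac1p)(1-\theta)+(\frac12-\frac1p)\theta\right]}.
\]
A one-line computation reduces the bracketed exponent to $(1-\frac1p)-\frac{\theta}{2}=(1-\frac1p)-(1-\frac1q)=\frac1q-\frac1p$, which is exactly the asserted rate. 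Taking $C:=\max(C_0,C_1)$, which is independent of $p$ and $q$, and repeating the argument verbatim for $S^I(t)\partial_x^l$ using the second halves of the displayed inequalities in Corollary \ref{Sbd} and Lemma \ref{l:additional}, yields \eqref{RTest}.

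I do not expect a genuine obstacle here; the argument is essentially textbook interpolation. The only points warranting (minor) care are that the endpoint constants furnished by Corollary \ref{Sbd} and Lemma \ref{l:additional} are uniform in $p$ — so that the interpolated constant inherits this uniformity — and that the interpolation is carried out at each fixed $t$, so that the two time-decay factors combine geometrically rather than being handled by any $t$-dependent family of operators (classical Riesz--Thorin suffices; no analytic interpolation is needed).
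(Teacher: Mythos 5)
Your argument is exactly the one the paper uses: the corollary is stated immediately after the sentence ``By Riesz--Thorin interpolation between \eqref{nottriv} and \eqref{SIest}, we obtain the following,'' and your interpolation parameter $\theta=2-2/q$ together with the exponent computation reproduces the claimed rate $(1+t)^{-\frac{d}{2}(\frac1q-\frac1p)}$. The proposal is correct and matches the paper's approach; the remarks on $p$-uniformity of constants and on working at fixed $t$ are sensible but not additional to what the paper tacitly assumes.
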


\begin{prop}\label{p:addlinstab}
Assuming (H1)-(H3), (D1)-(D3), for some $C>0$, all $t\ge 0$,
$1\le q\le 2\le p$, and $0\le l\le K$,
\ba\label{addSbound}
\|S(t)\partial_{x}^l u_0\|_{L^p} &\le
C(1+t)^{-\frac{d}{2}(\frac{1}{2}-\frac{1}{p})+\frac{l}{2}}
t^{-\frac{d}{2}(\frac{1}{q}-\frac{1}{2}) - \frac{l}{2} }
\|u_0 \|_{L^q\cap L^2}.
\ea
\end{prop}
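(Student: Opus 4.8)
The plan is to imitate the proof of Proposition~\ref{p:linstab}: decompose the linearized solution operator as $S(t)=S^I(t)+S^{II}(t)$ using the low/high-frequency cutoff \eqref{SI}--\eqref{SII}, and estimate the two pieces separately, now invoking the sharpened low-frequency bound \eqref{RTest} of Corollary~\ref{RT} in place of \eqref{SIest}.

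First I would bound the low-frequency piece. Since $1\le q\le 2\le p$ and $0\le l\le K$, Corollary~\ref{RT} applies verbatim and gives
\[
\|S^I(t)\,\partial_x^l u_0\|_{L^p}\le C(1+t)^{-\frac d2\left(\frac1q-\frac1p\right)}\|u_0\|_{L^q}.
\]
It then suffices to check this is dominated by the right-hand side of \eqref{addSbound}. For $t\ge 1$ the two polynomial rates coincide, since
\[
-\tfrac d2\Bigl(\tfrac12-\tfrac1p\Bigr)+\tfrac l2-\tfrac d2\Bigl(\tfrac1q-\tfrac12\Bigr)-\tfrac l2=-\tfrac d2\Bigl(\tfrac1q-\tfrac1p\Bigr);
\]
for $0\le t\le 1$ the left-hand side above is $O(\|u_0\|_{L^q})$, while the right-hand side of \eqref{addSbound} is bounded below by a positive multiple of $\|u_0\|_{L^q\cap L^2}$, its power of $t$ carrying a nonpositive exponent (recall $q\le 2$, $l\ge 0$).

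Next I would bound the high-frequency piece. The second estimate in \eqref{SIIest} with $m=l$ (admissible as $0\le l\le K$) gives
\[
\|S^{II}(t)\,\partial_x^l u_0\|_{L^p}\le Ct^{-\frac d2\left(\frac12-\frac1p\right)-\frac l2}e^{-\theta t}\|u_0\|_{L^2}.
\]
For $t\ge 1$ the exponential factor beats every polynomial, so this is majorized by the polynomially decaying right-hand side of \eqref{addSbound}; for $0\le t\le 1$ one has $e^{-\theta t}\le1$, and the remaining power of $t$ is absorbed into the factor $t^{-\frac d2\left(\frac1q-\frac12\right)-\frac l2}$ appearing on the right-hand side of \eqref{addSbound}. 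Summing the low- and high-frequency bounds yields \eqref{addSbound}.

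I do not expect a genuine obstacle: the statement is the exact analogue of Proposition~\ref{p:linstab} with $L^1$ replaced by $L^q$, and its proof is a formal consequence of the linearized estimates assembled in Section~\ref{linests}. The one point deserving (routine) care is the separate treatment of the regimes $t\le1$ and $t\ge1$ in the exponent comparisons above, together with the observation that the high-frequency contribution carries only the $L^2$ norm of the data and must therefore be reconciled with the $\|u_0\|_{L^q\cap L^2}$ on the right-hand side of \eqref{addSbound} --- which is precisely why the norm there is the intersection $L^q\cap L^2$ rather than $L^q$ alone.
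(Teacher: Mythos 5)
Your overall strategy — split $S=S^I+S^{II}$ and combine the high-frequency bound \eqref{SIIest} with the interpolated low-frequency bound \eqref{RTest} — is exactly what the paper does; its proof is the one-liner ``Immediate, from (\ref{SIIest}) and (\ref{RT}).'' So in terms of method you are aligned with the authors.

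However, the step where you claim the small-time singularity of $S^{II}$ is ``absorbed into the factor $t^{-\frac d2(\frac1q-\frac12)-\frac l2}$'' does not go through in general. For $0\le t\le 1$ the $(1+t)$-factor on the right of \eqref{addSbound} is $O(1)$, so from \eqref{SIIest} with $m=l$ you need
\[
t^{-\frac d2\left(\frac12-\frac1p\right)-\frac l2}\;\lesssim\; t^{-\frac d2\left(\frac1q-\frac12\right)-\frac l2},\qquad 0<t\le 1,
\]
which, since $t\le1$, forces $\frac12-\frac1p\le\frac1q-\frac12$, i.e. $\frac1q+\frac1p\ge1$. This holds automatically when $q=1$, but fails for $q\in(1,2]$ and $p$ large (e.g.\ $q=2$, $p>2$). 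Indeed, taking $q=2$, $p>2$, $l=0$, the stated right-hand side of \eqref{addSbound} is $O(\|u_0\|_{L^2})$ as $t\to0^+$, whereas $\|S(t)u_0\|_{L^p}\to\|u_0\|_{L^p}$ can be infinite for $u_0\in L^2\setminus L^p$; so the proposition as printed is false in this regime and the small-$t$ exponent on $t$ should evidently read $-\frac d2(\frac12-\frac1p)-\frac l2$, matching the high-frequency bound. Your proof quietly asserts a comparison that is only valid under the extra restriction $\frac1q+\frac1p\ge1$; you should either impose that restriction or correct the $t$-exponent, rather than claim the absorption is automatic. (The paper's one-line proof does not surface this issue either, so you have in effect uncovered an unstated constraint or a typo.)
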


\begin{proof}
Immediate, from (\ref{SIIest}) and (\ref{RT}).
\end{proof}

%
%

\section{Refined linearized estimates}\label{s:refined}
The bounds of Proposition \ref{p:linstab} are sufficient to
establish nonlinear stability and asymptotic behavior in
dimensions $d\ge 3$, as shown in \cite{OZ4}.
However, they are not sufficient in the critical dimensions
$d=1,2$; see Remark 1, Section 7 of \cite{OZ4}.
Comparison with standard diffusive stability arguments as in \cite{Z7}
show that this is due to the fact that the full solution operator
$|S(t)\partial_x |$ decays no faster
than $S(t)$, or, equivalently, $G_y$ no faster than $G$.

Following the basic strategy introduced in \cite{ZH,Z1,MaZ2,MaZ4}
in the context of viscous shock waves,
we now perform a refined linearized estimate separating
slower-decaying translational modes from a
faster-decaying ``good'' part of the solution operator.
This will be used in Section \ref{s:nonlin} in combination with
certain nonlinear cancellation estimates to show convergence
to the modulated approximation \eqref{mod} at a faster rate
sufficient to close the nonlinear iteration.

The key to this decomposition is the following observation.

\begin{lemma}\label{blochfacts}
Assuming (H1)--(H3), (D1)--(D3), let $\lambda_j(\xi/|\xi|, \xi)$,
$q_j(\xi/|\xi|, \xi, \cdot)$, $\tilde q_j(\xi/|\xi|, \xi, \cdot)$
denote the eigenvalues and associated right and left eigenfunctions of
$L_\xi$, with $q_j$, $\tilde q_j$ smooth functions of $\xi/|\xi|$ and
$|\xi|$ as noted in Prop. \ref{kernels}.
Then, without loss of generality, $q_1(\omega, 0, \cdot)\equiv \bar u'$,
while $\tilde q_j(\omega, 0, \cdot)$ for $j\ne 1$ are constant functions
depending only on angle $\omega=\xi/|\xi|$.
\end{lemma}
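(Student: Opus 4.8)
The plan is to identify the zero eigenspace of $L_0$ explicitly and then read off the claimed normal forms from the structure of that eigenspace together with the first-order splitting of the $\lambda_j$. First I would recall that by (D3) the value $\lambda=0$ is a semisimple eigenvalue of $L_0$ of multiplicity exactly $n+1$, so $\ker L_0$ is $(n+1)$-dimensional and $L_0$ is diagonalizable on its generalized kernel. Differentiating the profile relation \eqref{e:profile} in $x_1$ shows $\bar u'$ satisfies $L_0 \bar u' = 0$ (recall $L_0 = \partial_{x_1}^2 - (A^1\cdot)_{x_1}$ with $A^1 = Df^1(\bar u)$, and $\bar u'$ is periodic), so $\bar u' \in \ker L_0$; this is the translational mode. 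The remaining $n$ dimensions of $\ker L_0$ come, via (H2) and the linearized existence theory, from differentiating the family of nearby periodic profiles $\bar u^a$ with respect to the parameters $a \in \RR^n$ that do not change the period or normal — cf.\ the footnote to (D3) and \cite{Se1,OZ3}. Since $L_0$ is semisimple at $0$, I may choose the basis $q_1(\omega,0,\cdot), \dots, q_{n+1}(\omega,0,\cdot)$ of $\ker L_0$ freely up to a change of basis depending on $\omega$; in particular I take $q_1(\omega,0,\cdot) \equiv \bar u'$, which is manifestly independent of $\omega$. This establishes the first assertion.

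For the second assertion, about the left eigenfunctions $\tilde q_j(\omega,0,\cdot)$, $j\ne 1$, I would argue that each such $\tilde q_j(\omega,0,\cdot)$ lies in $\ker L_0^*$, and that $\ker L_0^*$ consists of constant (in $x_1$) functions. The adjoint operator is $L_0^* = \partial_{x_1}^2 + (A^1)^\top \partial_{x_1}$, and a periodic function $w$ with $L_0^* w = 0$ satisfies, after one integration, $w' + (A^1)^\top w = c$ for a constant $c$; but periodicity forces $c$ to equal the mean of $(A^1)^\top w$ over a period, and a Floquet/Fredholm argument (the adjoint kernel has the same dimension $n+1$ as $\ker L_0$, and the $n+1$ constant vectors $e_1,\dots,e_n$ together with... ) — more cleanly: integrating $L_0^* w = 0 = w'' + ((A^1)^\top w)'$ over $[0,X]$ gives nothing new, so integrate once to get $w' + (A^1)^\top w \equiv \text{const}$, and then observe that the constant vectors are exactly the periodic solutions because $f^1(\bar u)$ is itself periodic so that $\partial_{x_1}\big(\text{const}\big)=0$ trivially solves $L_0^*(\text{const}) = \partial_{x_1}^2(\text{const}) + (A^1)^\top\partial_{x_1}(\text{const}) = 0$. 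Counting dimensions: the constants give an $n$-dimensional subspace of $\ker L_0^*$, and since $\dim \ker L_0^* = \dim \ker L_0 = n+1$ there is exactly one further direction, which must be $\tilde q_1(\omega,0,\cdot)$, the left eigenfunction dual to $\bar u'$ under the normalization $\langle \tilde q_1, q_1\rangle = 1$ and orthogonal to the other $q_k$. Hence for $j \ne 1$, $\tilde q_j(\omega,0,\cdot)$ may be taken among the constants, i.e.\ it depends only on the angle $\omega$ and not on $x_1$.

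The one place requiring care — and what I expect to be the main obstacle — is the joint bookkeeping of the $\omega$-dependence with the requirement (from Prop.\ \ref{kernels} and (H3)) that $q_j,\tilde q_j$ be \emph{smooth} functions of $|\xi|$ and $\hat\xi = \omega$ individually, while the normalization $\langle \tilde q_j, q_j\rangle \equiv 1$ and the biorthogonality $\langle \tilde q_j, q_k\rangle = \delta_{jk}$ at $\xi = 0$ must be maintainable. At $\xi = 0$ the operator $L_0$ does not see $\omega$ at all, so the decomposition of $\ker L_0$ into the $q_j(\omega,0,\cdot)$ is a priori $\omega$-dependent only through the choice of basis adapted to the first-order perturbation $\Pi_0 L^1|_{\ker L_0}$; by (H3) this perturbation has simple eigenvalues, so its eigenprojections — hence the adapted basis — are smooth in $\omega$, and one can arrange $q_1 \equiv \bar u'$ to be the eigenvector for the eigenvalue that is forced to vanish because $L^1 \bar u'$... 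I would verify that the translational eigenvalue branch $\lambda_1$ is precisely the one along which $q_1$ stays equal to $\bar u'$ to leading order, using that $L_\xi$ applied to $e^{i\xi_1 x_1}\bar u'$-type modulations produces the translational branch. The remaining left eigenfunctions $\tilde q_j(\omega,0,\cdot)$, $j\ne 1$, are then the smooth-in-$\omega$ selection from the $n$-dimensional constant subspace of $\ker L_0^*$ dual to the $q_k(\omega,0,\cdot)$, $k\ne 1$; smoothness in $\omega$ follows from smoothness of the adapted basis and invertibility of the Gram matrix. The rest is routine linear algebra and does not need to be ground out here.
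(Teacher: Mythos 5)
Your proof has a genuine gap at the crucial point. You write that the basis $q_1(\omega,0,\cdot),\dots,q_{n+1}(\omega,0,\cdot)$ ``may be chosen freely'' and then simply set $q_1 \equiv \bar u'$. But the whole point of Proposition \ref{kernels} and (H3) is that this basis is \emph{not} free: the functions $q_j(\omega,0,\cdot)$, $\tilde q_j(\omega,0,\cdot)$ are the $|\xi|\to 0$ limits of the actual eigenfunctions of $L_\xi$ along the ray of direction $\omega$, and by first-order spectral perturbation theory they are forced to be, up to scalar, the (by (H3) unique) right and left eigenvectors of the reduced operator $\Pi_0 L^1|_{\ker L_0}$. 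So the assertion that $\bar u'$ is one of the $q_j(\omega,0,\cdot)$ is a nontrivial claim: it says $\bar u'$ is an eigenvector of $\Pi_0 L^1|_{\ker L_0}$ for \emph{every} angle $\omega$. You acknowledge this in your last paragraph (``I would verify that the translational eigenvalue branch $\lambda_1$ is precisely the one along which $q_1$ stays equal to $\bar u'$''), but you never carry out the verification, and that verification is exactly the content of the lemma.

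The missing step is the short computation that the paper supplies: for any constant vector $c$,
\[
\langle c,\;L^1_\omega\,\bar u'\rangle
=\Big\langle c,\;\omega_1\partial_{x_1}^2\bar u - \sum_{j\ne 1}\omega_j\,\partial_{x_1}f^j(\bar u)\Big\rangle \equiv 0,
\]
since the integrand is a perfect $x_1$-derivative of a periodic function. Because the constants are precisely the $n$-dimensional subspace of $\ker L_0^*$ annihilating $\bar u'$ (your dimension count, together with the Fredholm/semisimplicity observation that $\bar u'$ cannot be orthogonal to \emph{all} of $\ker L_0^*$, gives this), it follows that $\Pi_0 L^1_\omega\bar u'$ has no component dual to the constants, i.e., $\Pi_0 L^1_\omega\bar u' \in \Span\{\bar u'\}$. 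Hence $\bar u'$ is a right eigenvector of $\Pi_0 L^1_\omega|_{\ker L_0}$ for every $\omega$, and by (H3) it must coincide (up to relabeling and scaling) with one of the $q_j(\omega,0,\cdot)$, say $j=1$. Only at this point does your second-paragraph biorthogonality argument become valid: once $q_1 = \bar u'$ is forced, the conditions $\langle\tilde q_j,q_1\rangle = 0$ for $j\ne 1$ place $\tilde q_j(\omega,0,\cdot)$ in the orthogonal complement of $\bar u'$ inside $\ker L_0^*$, which you have correctly identified as the space of constants. So the structure of your argument is salvageable, but as written the decisive calculation is absent and replaced by an unjustified ``choice.''
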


\begin{proof}
Expanding $L_\xi=L_0 + |\xi|L^1_{\xi/|\xi|}+ |\xi|^2L^2_{\xi/|\xi|}$
as in the introduction, consider the continuous family of spectral
perturbation problems in $|\xi|$ indexed by angle $\omega=\xi/|\xi|$.
Then, both facts follow by standard perturbation theory \cite{K} using the
observations that $\bar u'$ is in the right kernel of $L_0$ and
constant functions $c$ are in the left kernel of $L_0$, with
$$
\langle c,L^1 \bar u'\rangle=
\langle c,( \omega_1(2\partial_{x_1}-A_1) -\sum_{j\ne 1} \omega_j A_j)
)\bar u'\rangle=
\langle c, \omega_1\partial_{x_1}^2 \bar u  -\sum_{j\ne 1} \omega_j
\partial_{x_1} f^j(\bar u)\rangle
\equiv 0,
$$
where $\langle \cdot, \cdot\rangle$ denotes $L^2(x_1)$ inner product
on the interval $x_1\in [0,X]$,
that the dimension of $\ker L_0$ by assumption is $(n+1)$, so
that the orthogonal complement of $\bar u'$ in $\kernel L_0$ is dimension $n$
so exactly the set of constant functions,
and that by (H3) the functions $q_j(\omega, 0,\cdot)$
and $\tilde q_j(\omega,0)$ are right and left eigenfunctions
of $\Pi_0 L^1|_{\ker L_0}$ ($\Pi_0$ as earlier denoting the zero
eigenprojection associated with $L_0$).
\end{proof}

\br\label{wkbrmk}
\textup{
The key observation of Lemma \ref{blochfacts} can be motivated
by the form of the Whitham averaged system \eqref{e:wkb}.
For, recalling (Section \ref{s:discussion})
that (D3) implies that speed $s$ is stationary to first order
at $\bar u$ along the manifold of nearby periodic solutions,
we find that the last equation of \eqref{e:wkb} reduces to
$(\nabla_x \Psi)_t=0$, i.e., the equation for the translational
variation $\Psi$ decouples from the equations for variations
in other modes.
This corresponds heuristically to the fact derived above that
the translational mode $\bar u'(x_1)$ decouples in the first-order
eigenfunction expansion.
}
\er

\begin{cor}\label{greenbds}
Under assumptions (H1)--(H3), (D1)--(D3),
the Green function $G(x,t;y)$ of \eqref{e:lin} decomposes as
$G=E+\tilde G$,
\be\label{E}
E=\bar u'(x)e(x,t;y),
\ee
where, for some $C>0$, all $t>0$, $1\le q\le 2\le p\le \infty$, $0\le j,k, l$,
$j+l\le K$, $1\le r\le 2$,
\ba\label{sheatbds}
\Big|\int_{-\infty}^{+\infty} \tilde G(x,t;y)f(y)dy\Big|_{L^p(x)}&\le
C (1+t)^{-\frac{d}{2}(1/2-1/p)} t^{-\frac{1}{2}(1/q-1/2)}
|f|_{L^q\cap L^2},\\
\Big|\int_{-\infty}^{+\infty} \partial_y^r \tilde G(x,t;y)f(y)dy\Big|_{L^p(x)}&\le
C (1+t)^{-\frac{d}{2}(1/2-1/p)-\frac{1}{2}+\frac{r}{2}} \\
&\quad \times
t^{-\frac{d}{2}(1/q-1/2)-\frac{r}{2}} |f|_{L^q\cap L^2},\\
\Big|\int_{-\infty}^{+\infty} \partial_t^r \tilde G(x,t;y)f(y)dy\Big|_{L^p(x)}&\le
C (1+t)^{-\frac{d}{2}(1/2-1/p)-\frac{1}{2}+r}
\\ &\quad \times t^{-\frac{d}{2}(1/q-1/2)-r} |f|_{L^q\cap L^2}.\\
\ea
\ba\label{etbds}
\Big|\int_{-\infty}^{+\infty} \partial_x^j\partial_t^k\partial_y^l e(x,t;y)f(y)dy\Big|_{L^p}
&\le
(1+t)^{-\frac{d}{2}(1/q-1/p) -\frac{(j+k)}{2} }|f|_{L^q}.
\ea
Moreover, $e(x,t;y)\equiv 0$ for $t\le 1$.
\end{cor}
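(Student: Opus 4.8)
The plan is to isolate from the low‑frequency Green kernel $G^I$ of Proposition~\ref{kernels} the single slowest mode $j=1$, and, within it, to replace the Bloch eigenfunction $q_1(\xi,x_1)$ by its value $\bar u'(x_1)$ at $\xi=0$ given by Lemma~\ref{blochfacts}; the remainder then carries an extra factor of $|\xi|$, which is exactly the gain distinguishing $\tilde G$ from $G$ under $y$‑ and $t$‑differentiation. Concretely, fix a smooth cutoff $\chi(t)$ with $\chi\equiv 0$ on $t\le1$ and $\chi\equiv1$ on $t\ge2$, and set
\be
e(x,t;y):=\chi(t)\Big(\tfrac{1}{2\pi}\Big)^{d}\int_{\R^{d}}e^{i\xi\cdot(x-y)}\phi(\xi)\,e^{\lambda_1(\xi)t}\,\tilde q_1(\xi,y_1)^{*}\,d\xi ,
\ee
$E:=\bar u'(x_1)\,e(x,t;y)$, and $\tilde G:=G-E=G^{II}+(G^I-E)$. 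Then $e\equiv 0$ for $t\le1$ by construction, and subtracting $E$ from the $j=1$ term of the expansion in Proposition~\ref{kernels} gives, for all $t>0$ (with $\chi=\chi(t)$),
\be
G^I-E=\Big(\tfrac{1}{2\pi}\Big)^{d}\int_{\R^{d}}e^{i\xi\cdot(x-y)}\phi(\xi)\Big[\big(q_1(\xi,x_1)-\chi(t)\bar u'(x_1)\big)e^{\lambda_1 t}\tilde q_1^{*}+\sum_{j=2}^{n+1}e^{\lambda_j t}q_j(\xi,x_1)\tilde q_j(\xi,y_1)^{*}\Big]d\xi ,
\ee
where by Lemma~\ref{blochfacts}, for $t\ge2$ the prefactor $q_1(\xi,x_1)-\bar u'(x_1)=\Or(|\xi|)$ uniformly in $x_1$, since $q_1$ is smooth in $(|\xi|,\hat\xi,x_1)$ with $q_1(\hat\xi,0,\cdot)\equiv\bar u'$.

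For the kernel $e$ (estimate \eqref{etbds}) I would observe that the symbol of $\partial_x^{j}\partial_t^{k}\partial_y^{l}e$ is, pointwise in $(x_1,y_1)$ and uniformly, bounded by $C|\xi|^{j+k}\phi(\xi)e^{-\theta|\xi|^{2}t}$ on $t\ge1$: here $\Re\lambda_1\le-\theta|\xi|^{2}$ and $|\lambda_1|\le C|\xi|$ come from \eqref{e:surfaces} and (D2), a $\partial_{y_1}$ landing on $\tilde q_1$ is harmless while any derivative landing on $e^{i\xi\cdot(x-y)}$ only produces harmless powers of $\xi$ (bounded by $2\eps$ on the support of $\phi$), and on $1<t<2$ the factors $\chi^{(m)}$ are bounded. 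Since these symbols are compactly supported, the $L^{q}\to L^{p}$ bounds for $1\le q\le2\le p$ follow exactly as in the proofs of Proposition~\ref{Gbds}, Lemma~\ref{l:additional} and Corollary~\ref{RT}: Parseval and the isometry \eqref{iso} give an $L^{2}\to L^{2}$ bound $C\||\xi|^{j+k}\phi e^{-\theta|\xi|^{2}t}\|_{L^{\infty}(\xi)}\le C(1+t)^{-(j+k)/2}$, the triangle inequality in $\xi$ gives $L^{1}\to L^{\infty}$ and $L^{2}\to L^{\infty}$ bounds $C(1+t)^{-(d+j+k)/2}$ and $C(1+t)^{-d/4-(j+k)/2}$, a direct Minkowski estimate (or duality) gives the matching $L^{1}\to L^{2}$ bound, and Riesz--Thorin interpolation among these four endpoints yields \eqref{etbds}.

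For $\tilde G$ (estimates \eqref{sheatbds}) I would treat $G^{II}$ and $R:=G^I-E$ separately. The exponential bounds of Proposition~\ref{p:hf} show $\partial_x^{j}G^{II}$ and $\partial_t^{j}G^{II}=L^{j}G^{II}$ map $L^{2}$ into every $L^{p}$, $p\ge2$, with exponential time decay, dominated by the right‑hand sides of \eqref{sheatbds}; the regime $0<t\le2$ (where $E\equiv0$, so $R=G^I$) reduces — using $(1+t)\asymp1$ — to Corollary~\ref{RT} and Lemma~\ref{l:additional}. For $t\ge2$ I would use the displayed formula for $R$. The undifferentiated bound is immediate: the symbol of $R$ is bounded pointwise by $C\phi(\xi)e^{-\theta|\xi|^{2}t}$, so the four‑endpoint argument above gives $\le C(1+t)^{-\frac d2(1/q-1/p)}|f|_{L^{q}}$, and $(1+t)^{-\frac d2(1/q-1/p)}\le(1+t)^{-\frac d2(1/2-1/p)}t^{-\frac12(1/q-1/2)}$ for $t\ge1$ since $d\ge1$. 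The $y$‑ and $t$‑derivative bounds are where the decomposition does its work: the essential point is that each such derivative of $R$ produces at least one factor of $|\xi|$ in its symbol — for $\partial_t$ via $\lambda_j=\Or(|\xi|)$, for $\partial_{\tilde y}$ via the $\xi$ brought down from $e^{i\xi\cdot(x-y)}$, and for $\partial_{y_1}$ via either that same $\xi$ or, when $\partial_{y_1}$ lands on $\tilde q_j$, the bound $\partial_{y_1}^r\tilde q_j(\xi,\cdot)=\Or(|\xi|)$ for $j\ge2$ (which holds because $\tilde q_j(\hat\xi,0,\cdot)$ is \emph{constant} by Lemma~\ref{blochfacts}), together with the $\Or(|\xi|)$ prefactor in the $j=1$ term. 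Hence for $1\le r\le2$ the symbol of $\partial_y^{r}R$ (resp. $\partial_t^{r}R$) is bounded by $C|\xi|\phi(\xi)e^{-\theta|\xi|^{2}t}$, and rerunning the four‑endpoint Riesz--Thorin argument with this extra $|\xi|$ produces an additional factor $(1+t)^{-1/2}$ — exactly what \eqref{sheatbds} asserts (note only one power of $|\xi|$, hence only a $t^{-1/2}$ gain, is claimed even for $r=2$).

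I expect two points to require care. First, the dispersion data $\lambda_j,q_j,\tilde q_j$ are only smooth in $|\xi|$ and $\hat\xi$ separately, not in $\xi\in\R^{d}$ near the origin; the argument must therefore be arranged to use only pointwise size bounds on the symbols and never their $\xi$‑derivatives (contrast Hörmander‑type multiplier bounds or the weighted‑energy method, cf. Remark~\ref{nonsmooth}). Second — and this is the real content — the asymmetry in the last paragraph: because $\tilde q_1(\hat\xi,0,\cdot)$ is the generally non‑constant left null function paired with $\bar u'$, while $\tilde q_j(\hat\xi,0,\cdot)$ for $j\ne1$ is constant, it is exactly, and only, the translational mode $E=\bar u'(x)e$ that must be peeled off to gain a power of $|\xi|$ under $y$‑differentiation; verifying that this one subtraction suffices (no variations along the full periodic manifold being needed) is the step I expect to be the crux, the rest being a routine repackaging of the linearized toolkit of Section~\ref{linests}.
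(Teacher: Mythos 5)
Your proposal is correct and takes essentially the same route as the paper: same definition of $e$ (with the cutoff $\chi(t)$), same observation from Lemma~\ref{blochfacts} that $q_1(\hat\xi,0,\cdot)=\bar u'$ and $\tilde q_j(\hat\xi,0,\cdot)=\const$ for $j\ge2$ so that every $\partial_y$ or $\partial_t$ applied to $G^I-E$ produces an $\Or(|\xi|)$ symbol, and the same combination with the $G^{II}$ bounds and the $L^q\to L^p$ interpolation machinery of Lemma~\ref{l:additional}/Corollary~\ref{RT}. The only difference is expository — you spell out the four-endpoint Riesz--Thorin step and the $0<t\le2$ regime explicitly, where the paper simply refers back to the earlier arguments — so there is no substantive divergence.
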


\begin{proof}
We first treat the simpler case $q=1$.
Recalling that
\ba\label{Gnew}
G^I(x,t;y)&=
\Big(\frac{1}{2\pi }\Big)^d \int_{\R^{d}}
e^{i\xi \cdot (x-y)}
\phi(\xi)
\sum_{j=1}^{n+1}e^{\lambda_j(\xi)t} q_j(\xi,x_1)\tilde q_j(\xi, y_1)^*
d\xi,
\ea
define
\ba\label{enew}
\tilde e(x,t;y)&=
\Big(\frac{1}{2\pi }\Big)^d \int_{\R^{d}}
e^{i\xi \cdot (x-y)}
\phi(\xi)
e^{\lambda_1(\xi)t} \tilde q_1(\xi, y_1)^* d\xi,
\ea
so that
\ba\label{Gdiff}
G^I(x,t;y)-\bar u'(x_1)\tilde e(x,t;y)&=
\Big(\frac{1}{2\pi }\Big)^d \int_{\R^{d}}
e^{i\xi \cdot (x-y)}
\phi(\xi)
\sum_{j=2}^{n+1}e^{\lambda_j(\xi)t} q_j(\xi/|\xi|, 0,x_1)\tilde q_j(\xi, y_1)^*
d\xi\\
&+
\Big(\frac{1}{2\pi }\Big)^d \int_{\R^{d}}
\sum_{j=1}^{n+1}e^{i\xi \cdot (x-y)}
\phi(\xi)
e^{\lambda_j(\xi)t} O(|\xi|) d\xi.
\ea

Noting, by Lemma \ref{blochfacts}, that
$\partial_y \tilde q(\omega, 0, y)\equiv \const$
for $j\ne 1$, we have therefore
\ba\label{Gdiffy}
\partial_y^r(G^I(x,t;y)-\bar u'(x_1)\tilde e(x,t;y))&=
\Big(\frac{1}{2\pi }\Big)^d \int_{\R^{d}}
e^{i\xi \cdot (x-y)}
\phi(\xi)
\sum_{j=1}^{n+1}e^{\lambda_j(\xi)t}O(|\xi|) d\xi,\\
\ea
which readily gives
\ba\label{Gdiffyest}
|\partial_y^r(G^I(x,t;y)-\bar u'(x_1)\tilde e(x,t;y))|_{L^p}&\le
C(1+t)^{-\frac{d}{2}(1-1/p) -\frac{1}{2}},
\ea
$p\ge 2$, by the same argument used to prove \eqref{GIest}, and
similarly
\ba\label{Gdiffyest2}
|\partial_t^r(G^I(x,t;y)-\bar u'(x_1)\tilde e(x,t;y))|_{L^p}&\le
c(1+t)^{-\frac{d}{2}(1-1/p) -\frac{1}{2}}.
	\ea
These yield \eqref{sheatbds} by the triangle inequality.

Defining $e(x,t;y):= \chi(t)\tilde e(x,t;y)$, where
$\chi$ is a smooth cutoff function
such that $\chi(t)\equiv 1$ for $t\ge 2$ and $\chi(t)\equiv 0$ for $t\le 1$,
and setting $\tilde G:=G-\bar u'(x_1)e(x,t;y)$,
we readily obtain the estimates \eqref{sheatbds} by combining
\eqref{Gdiffyest2} with bound \eqref{SIIest} on $G^{II}$.
Bounds \eqref{etbds} follow from \eqref{enew} by the argument
used to prove \eqref{GIest}, together with the observation that
$x$- or $t$-derivatives bring down factors of $|\xi|$,
followed again by an application of the triangle inequality.

The cases $1\le q\le 2$ follow similarly, by the arguments used
to prove \eqref{nottriv} and \eqref{RT}.
\end{proof}

\br\label{lowp}
\textup{
Despite their apparent complexity, the above bounds
may be recognized as essentially just the
standard diffusive bounds satisfied for the heat equation \cite{Z7}.
For dimension $d=1$, it may be shown using pointwise techniques as
in \cite{OZ2} that the bounds of Corollary
\ref{greenbds} extend to all $1\le q\le p\le \infty$.
}
\er

Note the strong analogy between the Green function decomposition
of Corollary \ref{greenbds} and that of \cite{MaZ3,Z4}
in the viscous shock case.
We pursue this analogy further in the nonlinear analysis of
the following sections,
combining the ``instantaneous tracking'' strategy of
\cite{ZH,Z1,Z4,Z7,MaZ2,MaZ4} with a type of
cancellation estimate introduced in \cite{HoZ}.

\section{Nonlinear stability in dimension one}\label{s:nonlin}

For clarity, we carry out the nonlinear stability analysis
in detail in the most difficult, one-dimensional, case,
indicating afterward by a few brief remarks the
extension to $d= 2$.
Hereafter, take $x\in \RR^1$, dropping the indices
on $f^j$ and $x_j$ and writing $u_t+f(u)_x=u_{xx}$.

\subsection{Nonlinear perturbation equations}\label{s:pert}

Given a solution $\tilde u(x,t)$ of \eqref{eqn:1conslaw},
define the nonlinear perturbation variable
\be\label{pertvar}
v=u-\bar u=
\tilde {u}(x+\psi(x,t))-\bar u(x),
\ee
where
\be\label{uvar}
u(x,t):=\tilde {u}(x+\psi(x,t))
\ee
and $\psi:\RM\times\RM\to\RM$ is to be chosen later.

\begin{lem}\label{4.1}
For $v$, $u$ as in \eqref{pertvar},\eqref{uvar},
\begin{equation}\label{eqn:1nlper}
u_t+f(u)_{x}-u_{xx}=\left(\partial_t-L\right)\bar{u}'(x_1)\psi(x,t)
+\partial_x R + (\partial_t+\partial_x^2)  S ,
\ee
where
\[
R:= v\psi_t + v\psi_{xx}+  (\bar u_x +v_x)\frac{\psi_x^2}{1+\psi_x}
= O(|v|(|\psi_t|+|\psi_{xx}|) +\Big(\frac{|\bar u_x|+|v_x|}{1-|\psi_x|} \Big)|\psi_x|^2)
\]
and
\[
S:=- v\psi_x =O(|v|(|\psi_x|).
\]
\end{lem}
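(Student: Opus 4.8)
The plan is to verify \eqref{eqn:1nlper} by a direct computation, treating $\psi$ as an arbitrary smooth function with $|\psi_x|<1$ and using only that $\tilde u$ solves $u_t+f(u)_x=u_{xx}$ and that $\bar u$ satisfies the profile equation \eqref{e:profile}. Write $y=x+\psi(x,t)$, so that $u(x,t)=\tilde u(y,t)$. First I compute the residual $u_t+f(u)_x-u_{xx}$ by the chain rule: $u_t=\tilde u_t(y,t)+\tilde u_y(y,t)\psi_t$, $u_x=\tilde u_y(y,t)(1+\psi_x)$, $u_{xx}=\tilde u_{yy}(y,t)(1+\psi_x)^2+\tilde u_y(y,t)\psi_{xx}$, and $f(u)_x=Df(\tilde u(y,t))\tilde u_y(y,t)(1+\psi_x)=f(\tilde u)_y(y,t)(1+\psi_x)$. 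Substituting the relation $\tilde u_t=\tilde u_{yy}-f(\tilde u)_y$ (i.e. the conservation law evaluated at $(y,t)$) and collecting, using $-1+(1+\psi_x)=\psi_x$ and $1-(1+\psi_x)^2=-(2\psi_x+\psi_x^2)$, gives
\[
u_t+f(u)_x-u_{xx}=-\tilde u_{yy}(2\psi_x+\psi_x^2)+f(\tilde u)_y\,\psi_x+\tilde u_y\,\psi_t-\tilde u_y\,\psi_{xx}.
\]

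Next I expand the right-hand side of \eqref{eqn:1nlper}. The linear term is handled by noting that in one dimension $L=\partial_x^2-\partial_x(A\,\cdot)$ with $A=Df(\bar u)$, and that differentiating $\bar u'=f(\bar u)-q$ yields $\bar u''=A\bar u'$, hence $A\bar u'=\bar u''$ and $(A\bar u')_x=\bar u'''$; a short calculation then gives $L\bar u'=0$ together with
\[
L(\bar u'\psi)=(\bar u'\psi)_{xx}-(A\bar u'\psi)_x=\bar u''\psi_x+\bar u'\psi_{xx},
\]
so $(\partial_t-L)(\bar u'\psi)=\bar u'\psi_t-\bar u''\psi_x-\bar u'\psi_{xx}$. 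Expanding $\partial_x R$ and $(\partial_t+\partial_x^2)S$ directly from the given formulas for $R$ and $S$, then using $u=\bar u+v$ to combine $\bar u'+v_x=u_x$, $\bar u''+v_{xx}=u_{xx}$, $v_t=u_t$ (valid since $\bar u$ is $t$-independent), and $(\bar u_x+v_x)\tfrac{\psi_x^2}{1+\psi_x}=u_x\tfrac{\psi_x^2}{1+\psi_x}$, the mixed terms $v\psi_{xt}$ and $v\psi_{xxx}$ cancel in pairs and the right-hand side of \eqref{eqn:1nlper} collapses to
\[
u_x\psi_t-(u_t+u_{xx})\psi_x-u_x\psi_{xx}+\partial_x\Big(u_x\tfrac{\psi_x^2}{1+\psi_x}\Big).
\]

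Finally I reconcile the two expressions. Re-expressing the last display in $y$-derivatives of $\tilde u$ via $u_x=\tilde u_y(1+\psi_x)$, $u_{xx}=\tilde u_{yy}(1+\psi_x)^2+\tilde u_y\psi_{xx}$, $u_t=\tilde u_{yy}-f(\tilde u)_y+\tilde u_y\psi_t$ (using the PDE for $\tilde u$ once more), and $\partial_x[\tilde u_y(y,t)]=\tilde u_{yy}(1+\psi_x)$, so that $u_x\tfrac{\psi_x^2}{1+\psi_x}=\tilde u_y\psi_x^2$: the $\tilde u_y\psi_t$, $f(\tilde u)_y\psi_x$, and $\tilde u_y\psi_{xx}$ terms appear with the correct coefficients, the $\tilde u_y\psi_x\psi_t$ and $\tilde u_y\psi_x\psi_{xx}$ contributions cancel, and the three $\tilde u_{yy}$ terms combine through the polynomial identity $-1-(1+\psi_x)^2+\psi_x(1+\psi_x)=-(2+\psi_x)$ to give exactly $-\tilde u_{yy}(2\psi_x+\psi_x^2)$. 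This matches the expression for $u_t+f(u)_x-u_{xx}$ obtained above, proving \eqref{eqn:1nlper}. The $O(\cdot)$ bounds on $R$ and $S$ are then immediate from their explicit forms: $|v\psi_t+v\psi_{xx}|\le|v|(|\psi_t|+|\psi_{xx}|)$, $\big|(\bar u_x+v_x)\tfrac{\psi_x^2}{1+\psi_x}\big|\le\tfrac{|\bar u_x|+|v_x|}{1-|\psi_x|}|\psi_x|^2$ for $|\psi_x|<1$, and $|S|=|v||\psi_x|$.

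The computation has no deep obstacle; the only real care needed is the bookkeeping of the cancellations. The substantive point — and the reason for the particular shape of $R$ and $S$ — is that the split of the residual into the linear piece $(\partial_t-L)(\bar u'\psi)$ plus the "source'' pieces $\partial_x R$ and $(\partial_t+\partial_x^2)S$ is not unique, and one must check that with the stated choice of $R$, $S$ every leftover term is genuinely quadratically small, i.e. of the form $|v|$ times a $\psi$-derivative or $|\psi_x|^2$ times a first derivative; this is precisely what makes the representation usable in the nonlinear iteration of Sections \ref{s:pert}--\ref{s:cancellation}, in close analogy with the perturbation equations for viscous shock waves in \cite{MaZ3,HoZ}.
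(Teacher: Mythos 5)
Your computation is correct and uses the same basic ingredients as the paper's proof: the chain rule at $y=x+\psi$, the conservation law for $\tilde u$, the profile/translation-invariance identities $A\bar u'=\bar u''$ and $L\bar u'=0$, and the specific forms of $R$ and $S$. The only difference is organizational: the paper \emph{derives} $R$ and $S$ by subtracting the symmetric identity $(\partial_t-L)(\bar u'\psi)=\bar u_x\psi_t-\bar u_t\psi_x-(\bar u_x\psi_x)_x$ from \eqref{altform} and using \eqref{solvedderivs} to push the residual into perfect-derivative form, whereas you \emph{verify} the given formulas by expanding both sides in $\tilde u_y,\tilde u_{yy}$-notation and matching coefficients; both routes are valid and rest on the same identities.
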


\begin{proof}
To begin, notice from the definition of $u$ in \eqref{uvar} we have by a
straightforward computation
\begin{align*}
u_t(x,t)&=\tilde{u}_x(x+\psi(x,t),t)\psi_t(x,t)+\tilde{u}_t(x+\psi,t)\\
f(u(x,t))_x&=df(\tilde{u}(x+\psi(x,t),t))\tilde{u}_x(x+\psi,t)\cdot(1+\psi_x(x,t))
\end{align*}
and
\begin{align*}
u_{xx}(x,t)&=\left(\tilde{u}_x(x+\psi(x,t),t)\cdot(1+\psi_x(x,t))\right)_x\\
&=\tilde{u}_{xx}(x+\psi(x,t),t)\cdot(1+\psi_x(x,t))+\left(\tilde{u}_x(x+\psi(x,t),t)\cdot\psi_x(x,t)\right)_x.
\end{align*}
Using the fact that
$\tilde u_t + df(\tilde{u})\tilde{u}_x-\tilde{u}_{xx}=0$, it follows that
\ba\label{altform}
u_t+f(u)_{x}-u_{xx}&=\tilde{u}_x\psi_t+df(\tilde{u})\tilde{u}_x\psi_x-\tilde{u}_{xx}\psi_x-\left(\tilde{u}_x\psi_x\right)_x\\
&= \tilde u_x \psi_t
-\tilde u_{t} \psi_x - (\tilde u_x \psi_x)_x
\ea
where it is understood that derivatives of $\tilde u$ appearing
on the righthand side
are evaluated at $(x+\psi(x,t),t)$.
Moreover, by another direct calculation,
using the fact that $L(\bar{u}'(x))=0$ by translation invariance,
we have
\begin{align*}
\left(\partial_t-L\right)\bar{u}'(x)\psi&=\bar{u}_x\psi_t
-\bar{u}_t\psi_{x} -(\bar{u}_x\psi_{x})_{x}.
\end{align*}
Subtracting, and using the facts that,
by differentiation of $(\bar u+ v)(x,t)= \tilde u(x+\psi,t)$,
\ba\label{keyderivs}
\bar u_x + v_x&= \tilde u_x(1+\psi_x),\\
\bar u_t + v_t&= \tilde u_t + \tilde u_x\psi_t,\\
\ea
so that
\ba\label{solvedderivs}
\tilde u_x-\bar u_x -v_x&=
-(\bar u_x+v_x) \frac{\psi_x}{1+\psi_x},\\
\tilde u_t-\bar u_t -v_t&=
-(\bar u_x+v_x) \frac{\psi_t}{1+\psi_x},\\
\ea
we obtain
\begin{align*}
u_t+ f(u)_{x} - u_{xx}&=
(\partial_t-L)\bar{u}'(x)\psi
+v_x\psi_t
- v_t \psi_x - (v_x\psi_x)_x
+ \Big((\bar u_x +v_x)\frac{\psi_x^2}{1+\psi_x} \Big)_x,
\end{align*}
yielding \eqref{eqn:1nlper} by
$v_x\psi_t - v_t \psi_x = (v\psi_t)_x-(v\psi_x)_t$
and
$(v_x\psi_x)_x= (v\psi_x)_{xx} - (v\psi_{xx})_{x} $.
\end{proof}

\begin{cor}
The nonlinear residual $v$ defined in \eqref{pertvar} satisfies
\be\label{veq}
v_t-Lv=\left(\partial_t-L\right)\bar{u}'(x_1)\psi
-Q_{x}+ R_x +(\partial_t+\partial_x^2)S,
\ee
where
\be\label{eqn:Q}
Q:=f(\tilde{u}(x+\psi(x,t),t))-f(\bar{u}(x))-df(\bar{u}(x))v=\mathcal{O}(|v|^2),
\ee
\be\label{eqn:R}
R:= v\psi_t + v\psi_{xx}+  (\bar u_x +v_x)\frac{\psi_x^2}{1+\psi_x},
\ee
and
\be\label{eqn:S}
S:= -v\psi_x =O(|v|(|\psi_x|).
\ee
\end{cor}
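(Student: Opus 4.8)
The plan is to obtain \eqref{veq} as a direct consequence of Lemma \ref{4.1}, the defining relation $v=u-\bar u$, and the profile equation for $\bar u$. First I would use that $\bar u=\bar u(x)$ is stationary, so $\bar u_t=0$ and $v_t=u_t$; recalling that in the present one-dimensional notation $Lw=w_{xx}-(df(\bar u)w)_x$, this gives
\[
v_t-Lv=u_t-u_{xx}+\bar u_{xx}+(df(\bar u)u)_x-(df(\bar u)\bar u)_x .
\]
Next I would invoke Lemma \ref{4.1}, which in one dimension reads $u_t+f(u)_x-u_{xx}=(\partial_t-L)\bar u'(x)\psi+R_x+(\partial_t+\partial_x^2)S$, and substitute $u_t-u_{xx}=-f(u)_x+(\partial_t-L)\bar u'\psi+R_x+(\partial_t+\partial_x^2)S$ into the identity above.

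It then remains only to reorganize the purely flux-dependent terms. Differentiating the profile relation \eqref{e:profile} (with $\nu=e_1$, $s=0$), namely $\bar u'=f(\bar u)-q$, yields $\bar u_{xx}=f(\bar u)_x$, so that
\[
-f(u)_x+f(\bar u)_x+(df(\bar u)u)_x-(df(\bar u)\bar u)_x
=-\partial_x\big(f(u)-f(\bar u)-df(\bar u)(u-\bar u)\big)=-Q_x ,
\]
where $Q$ is exactly the quantity in \eqref{eqn:Q} once one recalls $u=\tilde u(x+\psi,t)$ and $v=u-\bar u$. This is precisely \eqref{veq}, with $R$ and $S$ carried over verbatim from Lemma \ref{4.1}. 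Finally, $Q=\mathcal{O}(|v|^2)$ follows from Taylor's theorem with remainder applied to $f$ about $\bar u$, which is legitimate under the regularity hypothesis $f\in C^{K+1}$ of (H1); the $\mathcal{O}$-descriptions of $R$ and $S$ are inherited directly from Lemma \ref{4.1}.

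I do not expect a genuine obstacle here: the statement is essentially a bookkeeping reformulation of Lemma \ref{4.1}. The one point deserving attention is that $L$ annihilates $\bar u'$ but \emph{not} $\bar u$ itself, so the term $L\bar u=\bar u_{xx}-(df(\bar u)\bar u)_x$ must be tracked explicitly rather than discarded, and shown to combine with $-f(u)_x$ and $f(\bar u)_x$; it is exactly the profile identity $\bar u_{xx}=f(\bar u)_x$ that produces this cancellation and leaves the perturbation equation in the desired conservative form $v_t-Lv=(\partial_t-L)\bar u'\psi-Q_x+R_x+(\partial_t+\partial_x^2)S$ with the $\mathcal{O}(|v|^2)$ nonlinearity $Q$ in divergence position.
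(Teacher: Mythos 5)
Your proof is correct and follows essentially the same route as the paper's one-line proof, which simply says to compare \eqref{eqn:1nlper} with the profile equation $\bar u_t+f(\bar u)_x-\bar u_{xx}=0$ via Taylor expansion: subtracting that relation from Lemma \ref{4.1} and identifying $(df(\bar u)v)_x+Q_x=[f(u)-f(\bar u)]_x$ is the same bookkeeping you carry out, merely organized as expanding $Lv$ and using $\bar u_{xx}=f(\bar u)_x$. The only stylistic difference is that you spell out the profile ODE cancellation explicitly rather than invoking the stationary conservation-law form directly.
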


\begin{proof}
Taylor expansion comparing \eqref{eqn:1nlper} and
$\bar u_t + f(\bar u)_x-\bar u_{xx}=0$.
\end{proof}

\subsection{Cancellation estimate}\label{s:cancellation}

Our strategy in writing \eqref{veq} is motivated by the following
basic cancellation principle.

\begin{prop}[\cite{HoZ}]\label{p:cancellation}
For any $f(y,s)\in L^p \cap C^2$ with $f(y,0)\equiv 0$, there holds
\be\label{e:cancel}
\int^t_0 \int G(x,t-s;y) (\partial_s - L_y)f(y,s) dy\,ds
= f(x,t).
\ee
\end{prop}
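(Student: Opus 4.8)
The plan is to verify the identity \eqref{e:cancel} by direct computation, exploiting the fact that $G(x,t;y)$ is the Green kernel of the linear operator $\partial_t - L$, i.e. that $u(x,t) := \int G(x,t;y) g(y)\,dy$ solves $u_t = Lu$ with $u|_{t=0} = g$. The key observation is that the integrand $(\partial_s - L_y) f(y,s)$ is being convolved against $G$ in precisely the Duhamel pattern, so one expects the $L_y$ term to cancel against the $t$-derivative of the Duhamel integral, leaving only the boundary contribution from differentiating the upper limit, which is $f(x,t)$ (with the initial term vanishing because $f(y,0)\equiv 0$).

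Concretely, first I would write $w(x,t) := \int_0^t \int G(x,t-s;y)(\partial_s - L_y) f(y,s)\,dy\,ds$ and compute $\partial_t w$. Differentiating the upper limit of integration produces the term $\int G(x,0;y)(\partial_s f)(y,t)\,dy = (\partial_t f)(x,t)$, using $G(x,0;y) = \delta_y(x)$. Differentiating under the integral in the remaining $\int_0^t$ produces $\int_0^t \int \big(\partial_t G(x,t-s;y)\big)(\partial_s - L_y) f(y,s)\,dy\,ds$, and since $\partial_t G(x,t-s;y) = L_x G(x,t-s;y)$ (the Green function solves the linear equation in its forward variables), this equals $L_x w(x,t)$. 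Hence $w$ satisfies $(\partial_t - L)w = (\partial_t f)(x,t)$ together with $w|_{t=0} = 0$. Meanwhile $f$ itself satisfies $(\partial_t - L) f = (\partial_t - L) f$ — not obviously the same — so the cleaner route is to instead integrate by parts in $s$: write $\int_0^t \int G(x,t-s;y)\partial_s f(y,s)\,dy\,ds$, integrate by parts in $s$, picking up the boundary terms $\big[\int G(x,t-s;y) f(y,s)\,dy\big]_{s=0}^{s=t}$, which is $f(x,t) - 0$ since $G(x,0;y)=\delta_y(x)$ and $f(y,0)\equiv 0$; the resulting interior term is $\int_0^t \int \partial_s\big(G(x,t-s;y)\big) f(y,s)\,dy\,ds = -\int_0^t\int \partial_{(t-s)}G(x,t-s;y) f(y,s)\,dy\,ds = -\int_0^t \int L_x G(x,t-s;y) f(y,s)\,dy\,ds$. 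On the other hand, the $-L_y$ term of the original integrand, after moving $L_y$ onto $G$ by integration by parts in $y$ (using that $L$ is in "divergence form" $L v = \Delta v - \sum(A^j v)_{x_j}$ so $L_y^\dagger$ acting on $G(x,t-s;y)$ in the $y$-variable reproduces $L_x G$ by the standard adjoint/duality relation for the Green kernel), exactly cancels this interior term. What survives is precisely $f(x,t)$.

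The step I expect to be the main obstacle — or at least the one requiring the most care — is justifying the integration by parts in $y$ and the identification $(L_y \text{ acting on } G(x,t-s;y) \text{ in } y) = L_x G(x,t-s;y)$, i.e. the duality between the forward and backward Kolmogorov forms of the Green function for the non-self-adjoint, variable-(periodic-)coefficient operator $L$. This is standard but should be stated carefully, perhaps by appealing to the Bloch representation \eqref{greenform}–\eqref{Gxi}: in each Bloch fiber $L_\xi$ is an elliptic operator on $[0,X]$ with the stated eigenfunction expansion, its formal adjoint acts on the left eigenfunctions $\tilde q_j$, and the pairing $\langle \tilde q_j, q_j\rangle \equiv 1$ makes the duality transparent fiber-by-fiber; assembling the fibers gives the claimed relation with no boundary terms at spatial infinity once one invokes $f \in L^p \cap C^2$ (so that all the integrals converge and Fubini applies). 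A secondary point is the regularity bookkeeping: one needs enough smoothness of $f$ (hence the $C^2$ hypothesis) and enough decay to differentiate under the integral sign and to discard spatial boundary terms; since the statement is being quoted from \cite{HoZ}, it would suffice here to sketch the $s$-integration-by-parts argument above and refer to \cite{HoZ} for the full justification of the limiting/regularity arguments.
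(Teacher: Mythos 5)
Your argument is essentially the paper's: integrate by parts in $s$, use $G(x,0;y)=\delta(x-y)$ and $f(y,0)\equiv 0$ to identify the boundary contribution as $f(x,t)$, and move $L_y$ onto $G$ via the adjoint/duality relation $L_y^*G(x,t-s;y)=L_xG(x,t-s;y)$ so that the interior terms cancel; the paper simply packages both integrations by parts at once, writing $(\partial_t-L_y)^*G=\delta(x-y)\delta(t-s)$. One small slip: the interior term produced by the $s$-integration by parts is $-\int_0^t\!\int (\partial_s G)(x,t-s;y)f(y,s)\,dy\,ds = +\int_0^t\!\int L_x G\,f$, not $-\int L_x G\, f$ as you wrote (you dropped the minus sign from $\int u\,dv = [uv]-\int v\,du$); with the correct sign it cancels the $-\int L_y^* G\, f = -\int L_x G\, f$ from the other term, as you intended.
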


\begin{proof} Integrating the left hand side by parts, we obtain
\be
\int G(x,0;y)f(y,t)dy - \int G(x,t;y)f(y,0)dy
+ \int^t_0 \int
(\partial_t - L_y)^*G(x,t-s;y) f(y,s)dy\, ds.
\label{5.53.2}
\ee
Noting that, by duality,
$$
(\partial_t - L_y)^* G(x,t-s;y) = \delta(x-y) \delta(t-s),
$$
$\delta(\cdot)$ here denoting the Dirac delta-distribution,
we find that the third term on the righthand side
vanishes in \eqref{5.53.2}, while,
because $G(x,0;y) = \delta(x-y)$, the first term is simply $f(x,t)$.
The second term vanishes by $f(y,0)\equiv 0$.
\end{proof}

\br\label{trackrmk}
\textup{
For $\psi=\psi(t)$, term $(\partial_t-L)\bar u' \psi$
in \eqref{veq}
reduces to the term $\dot\psi(t)\bar u'(x)$ appearing
in the shock wave case \cite{ZH,Z1,Z4,Z7,MaZ2,MaZ4}.
}
\er

\subsection{Nonlinear damping estimate}


\begin{proposition}\label{damping}
Let $v_0\in H^K$ ($K$ as in (H1)), and suppose that
for $0\le t\le T$, the $H^K$ norm of $v$
and the $H^K(x,t)$ norms of $\psi_t$ and $\psi_x$
remain bounded by a sufficiently small constant.
There are then constants $\theta_{1,2}>0$ so that, for all $0\leq t\leq T$,
\begin{equation}\label{Ebounds}
|v(t)|_{H^K}^2 \leq C e^{-\theta_1 t} |v(0)|^2_{H^K} +
C \int_0^t e^{-\theta_2(t-s)} \left(|v|_{L^2}^2 +
|(\psi_t, \psi_x)|_{H^K(x,t)}^2 \right) (s)\,ds.
\end{equation}
\end{proposition}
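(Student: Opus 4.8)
The plan is to prove the $H^K$ damping estimate \eqref{Ebounds} by a standard Friedrichs-type energy estimate, exploiting the fact that \eqref{eqn:1conslaw} is a semilinear parabolic system, so that the linearized operator $L = \partial_x^2 - (A^1 \cdot)_x$ has a genuinely parabolic (nonnegative) principal part. The key point is that $v$ itself satisfies the quasilinear-looking equation obtained by writing \eqref{uvar} as a change of variables: since $\tilde u$ solves $\tilde u_t + f(\tilde u)_x = \tilde u_{xx}$ and $u(x,t) = \tilde u(x+\psi(x,t),t)$, one gets from \eqref{altform} (or directly) an equation of the form $v_t = v_{xx} - (A^1 v)_x + \mathcal{N}$, where $\mathcal{N}$ collects quadratic terms $O(|v|^2 + |v||v_x| + \dots)$ together with all the forcing terms involving $\psi_t, \psi_x, \psi_{xx}$ and their derivatives (i.e. the right-hand side of \eqref{veq}, or more conveniently the pre-integrated form \eqref{altform}). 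The aim is then to differentiate $k$ times, $0 \le k \le K$, multiply by $\partial_x^k v$, and integrate.

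First I would set up the weighted energy functional $\mathcal{E}(t) := \sum_{k=0}^K \langle \partial_x^k v, \partial_x^k v\rangle$ (possibly with small relative weights $\delta^k$ on the higher-order terms, as is customary to absorb commutator terms), and compute $\frac{d}{dt}\mathcal{E}$. The top-order term $\langle \partial_x^k v, \partial_x^{k+2} v\rangle = -|\partial_x^{k+1}v|^2_{L^2}$ supplies the crucial negative definite quantity $-\theta|v|^2_{H^{K+1}}$ (with loss of one derivative gained back in the dissipation). The convection term $-\langle \partial_x^k v, \partial_x^k (A^1 v)_x\rangle$ is handled by integration by parts and the Leibniz/commutator expansion: the genuinely top-order piece is antisymmetric up to lower-order commutators with the smooth periodic coefficient $A^1$, so it contributes only terms bounded by $C|v|_{H^K}|v|_{H^{K+1}}$, absorbable into the dissipation via Young's inequality at the cost of a constant times $|v|_{H^K}^2$ (this is where the $-\theta_1 t$ exponential and the $|v|_{L^2}^2$ term on the right come from — actually the $|v|_{L^2}^2$ is what remains after using $|v|_{H^K}^2 \le \eta |v|_{H^{K+1}}^2 + C_\eta|v|_{L^2}^2$, an interpolation inequality). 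The nonlinear terms $\mathcal{N}$: the purely quadratic ones $Q$, etc., are controlled using the smallness assumption $|v|_{H^K} \ll 1$ (and the algebra/Moser estimates for $H^K$, $K \ge [d/2]+4$ large enough that $H^K \hookrightarrow W^{2,\infty}$), so that $|\langle \partial_x^k v, \partial_x^k \mathcal{N}_{\text{quad}}\rangle| \le C|v|_{H^K}|v|_{H^{K+1}}^2 \le \eta|v|_{H^{K+1}}^2$ when $|v|_{H^K}$ is small; the $\psi$-forcing terms are bounded by $C|(\psi_t,\psi_x)|_{H^K(x,t)}(|v|_{H^{K+1}} + |(\psi_t,\psi_x)|_{H^K})$, again split by Young's inequality into an absorbable piece and the source term $|(\psi_t,\psi_x)|_{H^K(x,t)}^2$ appearing on the right of \eqref{Ebounds}. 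Collecting everything yields a differential inequality $\frac{d}{dt}\mathcal{E} \le -\theta\mathcal{E} + C(|v|_{L^2}^2 + |(\psi_t,\psi_x)|_{H^K(x,t)}^2)$, and Gr\"onwall's inequality gives \eqref{Ebounds} with $\theta_1 = \theta_2 = \theta$ (after noting $\mathcal{E} \sim |v|_{H^K}^2$).

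The main obstacle I anticipate is the careful bookkeeping of the forcing terms involving $\psi$ — in particular making sure that every term on the right-hand side of the perturbation equation, after $K$ differentiations, is controlled in terms only of $|(\psi_t,\psi_x)|_{H^K(x,t)}$ (note: the $(x,t)$ norm, so $\psi_{xx}$, $\psi_{xt}$, etc., up to order $K$ in the combined variables are available) and $|v|_{H^K}$, with no uncontrolled higher derivatives of $\psi$ appearing; the term $v\psi_{xx}$ in $R$ and the $\frac{\psi_x^2}{1+\psi_x}$ terms need the smallness of $|\psi_x|_{L^\infty}$ to keep the denominator bounded away from zero and to treat the composition via Moser-type estimates. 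A secondary technical point is ensuring the commutator terms from $[\partial_x^k, A^1]$ and from the transport structure are genuinely lower order; this is routine for smooth periodic $A^1$ but must be stated. Since \eqref{eqn:1conslaw} is semilinear (constant-coefficient, in fact identity, viscosity), there is no quasilinear top-order difficulty here, so the estimate is more elementary than its shock-wave analogues in \cite{MaZ4,Z7}; I would simply cite those for the structure of the argument and carry out the adaptation.
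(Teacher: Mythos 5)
Your overall strategy — Friedrichs-type $H^K$ energy estimate on the perturbation equation, absorb commutator and quadratic terms by smallness and Young's inequality, use the interpolation $|v|_{H^K}^2\le \eta|v|_{H^{K+1}}^2+C_\eta|v|_{L^2}^2$, finish with Gr\"onwall — is exactly the paper's strategy, and most of the bookkeeping you describe is fine. But there is one genuine gap in the way you propose to treat the $\psi$-forcing, and it is precisely the nontrivial point the paper's proof is built around.

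You claim that all the forcing terms involving $\psi$ are bounded by
$C|(\psi_t,\psi_x)|_{H^K(x,t)}\bigl(|v|_{H^{K+1}}+|(\psi_t,\psi_x)|_{H^K}\bigr)$. This is not true as stated. Whether you start from \eqref{veq} or from the pre-integrated form \eqref{vperturteq}, the source contains a term proportional to $v_t\psi_x$: in \eqref{veq} it appears through $S_t=-(v\psi_x)_t=-v_t\psi_x-v\psi_{xt}$, and in \eqref{vperturteq} through $\tilde u_t\psi_x$ after substituting \eqref{solvedderivs}. After $k$ spatial derivatives the top-order contribution is $\psi_x\,\partial_x^k v_t$, and $v_t$ is \emph{not} controlled by $|v|_{H^K}$ or $|(\psi_t,\psi_x)|_{H^K(x,t)}$ — through the equation it carries two extra spatial derivatives. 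So the estimate you write down for $\mathcal{N}$ fails for this term, and the differential inequality does not close as you have described it.

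The fix the paper uses is small but essential: instead of leaving $v_t\psi_x$ on the right, one rewrites \eqref{vperturteq} by moving this term to the left, obtaining $(1+\psi_x)v_t - v_{xx} = \cdots$ (this is \eqref{vperturteq2}), and then pairs against the \emph{weighted} test function $\sum_{j=0}^K \partial_x^{2j}v/(1+\psi_x)$ rather than $\sum_j\partial_x^{2j}v$. The weight cancels the $(1+\psi_x)$ coefficient on $v_t$, so the time-derivative term becomes $\partial_t|\partial_x^j v|_{L^2}^2$ up to harmless lower-order corrections in $\psi_{xt},\psi_{xx}$, and the remaining source terms on the right of \eqref{vperturteq2} really are bounded in the way you wanted. (Equivalently, you can keep $v_t\psi_x$ on the right, recognize $\int\psi_x\,\partial_x^kv\,\partial_x^kv_t\,dx=\tfrac12\partial_t\int\psi_x(\partial_x^kv)^2dx-\tfrac12\int\psi_{xt}(\partial_x^kv)^2dx$, and absorb the first piece into a modified, $\psi_x$-perturbed energy functional — but you must do \emph{something}; it cannot be lumped into a source term bounded by $|(\psi_t,\psi_x)|_{H^K(x,t)}$.) Once you incorporate this device, the rest of your argument goes through exactly as in the paper.
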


\begin{proof}
Subtracting from the equation \eqref{altform} for $u$
the equation for $\bar u$, we may write the
nonlinear perturbation equation as
\ba\label{vperturteq}
v_t + (df(\bar u)v)_x-v_{xx}= Q(v)_x
+ \tilde u_x \psi_t -\tilde u_{t} \psi_x - (\tilde u_x \psi_x)_x,
\ea
where it is understood that derivatives of $\tilde u$ appearing
on the righthand side
are evaluated at $(x+\psi(x,t),t)$.
Using \eqref{solvedderivs} to replace $\tilde u_x$ and
$\tilde u_t$ respectively by
$\bar u_x + v_x -(\bar u_x+v_x) \frac{\psi_x}{1+\psi_x}$
and
$\bar u_t + v_t -(\bar u_x+v_x) \frac{\psi_t}{1+\psi_x}$,
and moving the resulting $v_t\psi_x$ term to the lefthand side
of \eqref{vperturteq}, we obtain
\ba\label{vperturteq2}
(1+\psi_x) v_t -v_{xx}&=
-(df(\bar u)v)_x+ Q(v)_x
+ \bar u_x \psi_t
\\ &\quad
- ((\bar u_x+v_x)  \psi_x)_x
+ \Big((\bar u_x+v_x) \frac{\psi_x^2}{1+\psi_x}\Big)_x.
\ea

Taking the $L^2$ inner product in $x$ of
$\sum_{j=0}^K \frac{\partial_x^{2j}v}{1+\psi_x}$
against (\ref{vperturteq2}), integrating by parts,
and rearranging the resulting terms,
we arrive at the inequality
\[
\partial_t |v|_{H^K}^2(t) \leq -\theta |\partial_x^{K+1} v|_{L^2}^2 +
C\left( |v|_{H^K}^2
+
|(\psi_t, \psi_x)|_{H^K(x,t)}^2 \right)
,
\]
for some $\theta>0$, $C>0$, so long as $|\tilde u|_{H^K}$ remains bounded,
and $|v|_{H^K}$ and $|(\psi_t,\psi_x)|_{H^K(x,t)}$ remain sufficiently small.
Using the Sobolev interpolation
$
|v|_{H^K}^2 \leq  |\partial_x^{K+1} v|_{L^2}^2 + \tilde{C} | v|_{L^2}^2
$
for $\tilde{C}>0$ sufficiently large, we obtain
$
\partial_t |v|_{H^K}^2(t) \leq -\tilde{\theta} |v|_{H^K}^2 +
C\left( |v|_{L^2}^2+ |(\psi_t, \psi_x)|_{H^K(x,t)}^2 \right)
$
from which (\ref{Ebounds}) follows by Gronwall's inequality.
\end{proof}

\subsection{Integral representation/$\psi$-evolution scheme}

By Proposition \ref{p:cancellation},
we have, applying Duhamel's principle to \eqref{veq},
\ba\label{prelim}
  v(x,t)&=\int^\infty_{-\infty}G(x,t;y)v_0(y)\,dy  \\
  &\quad
  + \int^t_0 \int^\infty_{-\infty} G(x,t-s;y)
  (-Q_y+ R_x + S_t + S_{yy} ) (y,s)\,dy\,ds
+ \psi (t) \bar u'(x).
\ea
Defining $\psi$ implicitly as
\ba
  \psi (x,t)& =-\int^\infty_{-\infty}e(x,t;y) u_0(y)\,dy \\
&\quad
  -\int^t_0\int^{+\infty}_{-\infty} e(x,t-s;y)
  (-Q_y+ R_x + S_t + S_{yy} ) (y,s)\,dy\,ds ,
 \label{psi}
\ea
following \cite{ZH,Z4,MaZ2,MaZ3},
where $e$ is defined as in \eqref{E},
and substituting in \eqref{prelim} the decomposition $G=\bar u'(x)e +  \tilde G$ of Corollary \ref{greenbds},
we obtain the {\it integral representation}
\ba \label{u}
  v(x,t)&=\int^\infty_{-\infty} \tilde G(x,t;y)v_0(y)\,dy \\
&\quad
  +\int^t_0\int^\infty_{-\infty}\tilde G(x,t-s;y)
  (-Q_y+ R_x + S_t + S_{yy} ) (y,s)\,dy\,ds ,
\ea
and, differentiating (\ref{psi}) with respect to $t$,
and recalling that
$e(x,s;y)\equiv 0$ for $s \le 1 $,
\ba \label{psidot}
   \partial_t^j\partial_x^k \psi (x,t)&=-\int^\infty_{-\infty}\partial_t^j\partial_x^k
e(x,t;y) u_0(y)\,dy \\
&\quad
  -\int^t_0\int^{+\infty}_{-\infty} \partial_t^j\partial_x^k
e(x,t-s;y)
  (-Q_y+ R_x + S_t + S_{yy} ) (y,s)\,dy\,ds .
  \ea

Equations \eqref{u}, \eqref{psidot}
together form a complete system in the variables $(v,\partial_t^j \psi,
\partial_x^k\psi)$,
$0\le j\le 1$, $0\le k\le K$,
from the solution of which we may afterward recover the
shift $\psi$ via \eqref{psi}.
From the original differential equation \eqref{veq}
together with \eqref{psidot},
we readily obtain short-time existence and continuity with
respect to $t$ of solutions
$(v,\psi_t, \psi_x)\in H^K$
by a standard contraction-mapping argument based on \eqref{Ebounds},
\eqref{psi}, and and \eqref{etbds}.

%

\subsection{Nonlinear iteration}

Associated with the solution $(u, \psi_t, \psi_x)$ of integral system
\eqref{u}--\eqref{psidot}, define
\ba\label{szeta}
\zeta(t)&:=\sup_{0\le s\le t}
 |(v, \psi_t,\psi_x)|_{H^K}(s)(1+s)^{1/4} .
\ea

\bl\label{sclaim}
For all $t\ge 0$ for which $\zeta(t)$ is finite, some $C>0$,
and $E_0:=|u_0|_{L^1\cap H^K}$,
\be\label{eq:sclaim}
\zeta(t)\le C(E_0+\zeta(t)^2).
\ee
\el

\begin{proof}
By \eqref{eqn:R}--\eqref{eqn:S} and definition \eqref{szeta},
\ba\label{sNbds}
|(Q,R,S)|_{L^1\cap L^\infty}
&\le |(v,v_x,\psi_t,\psi_x)|_{L^2}^2+
|(v,v_x,\psi_t,\psi_x)|_{L^\infty}^2
\le C\zeta(t)^2 (1+t)^{-\frac{1}{2}},\\
\ea
so long as $|\psi_x|\le |\psi_x|_{H^K}\le \zeta(t)$ remains small,
and likewise (using the equation to bound $t$ derivatives in terms
of $x$-derivatives of up to two orders)
\ba\label{sNbds2}
|(\partial_t+\partial_x^2)S|_{L^1\cap L^\infty}
&\le |(v,\psi_x)|_{H^2}^2
+ |(v,\psi_x)|_{W^{2,\infty}}^2
\le C\zeta(t)^2 (1+t)^{-\frac{1}{2}}.\\
\ea

Applying Corollary \ref{greenbds} with $q=1$, $d=1$ to representations
\eqref{u}--\eqref{psidot}, we obtain for any $2\le p<\infty$
\ba\label{sest}
|v(\cdot,t)|_{L^p(x)}& \le
C(1+t)^{-\frac{1}{2}(1-1/p)}E_0 \\
&\quad +
C\zeta(t)^2\int_0^{t} (1+t-s)^{-\frac{1}{2}(1/2-1/p)}(t-s)^{-\frac{3}{4}}
(1+s)^{-\frac{1}{2}}ds\\
&
\le
 C(E_0+\zeta(t)^2) (1+t)^{-\frac{1}{2}(1-1/p)}
\ea
and
\ba\label{sestad}
|(\psi_t,\psi_x)(\cdot, t)|_{W^{K,p}}& \le
C(1+t)^{-\frac{1}{2}}E_0 +
C\zeta(t)^2\int_0^{t} (1+t-s)^{-\frac{1}{2}(1-1/p)-1/2}
(1+s)^{-\frac{1}{2}}ds \\
&
\le
 C(E_0+\zeta(t)^2) (1+t)^{-\frac{1}{2}(1-1/p)}.
\ea
Using \eqref{Ebounds} and \eqref{sest}--\eqref{sestad},
we obtain
$|v(\cdot,t)|_{H^K(x)} \le
 C(E_0+\zeta(t)^2) (1+t)^{-\frac{1}{4}}$.
Combining this with \eqref{sestad}, $p=2$, rearranging, and recalling
definition \eqref{szeta}, we obtain \eqref{sclaim}.
\end{proof}

\begin{proof}[Proof of Theorem \ref{main}]
By short-time $H^K$ existence theory,
$\|(v,\psi_t,\psi_x)\|_{H^{K}}$ is continuous so long as it
remains small, hence $\eta$ remains
continuous so long as it remains small.
By \eqref{sclaim}, therefore,
it follows by continuous induction that
$\eta(t) \le 2C \eta_0$ for $t \ge0$, if $\eta_0 < 1/ 4C$,
yielding by (\ref{szeta}) the result (\ref{eq:smallsest}) for $p=2$.
Applying \eqref{sest}--\eqref{sestad}, we obtain
(\ref{eq:smallsest}) for $2\le p\le p_*$ for any $p_*<\infty$,
with uniform constant $C$.
Taking $p_*>4$ and estimating
$$
|Q|_{L^2}, \, |R|_{L^2}, \, |S|_{L^2}(t)
\le |(v,\psi_t,\psi_x)|_{L^4}^2\le CE_0(1+t)^{-\frac{3}{4}}
$$
in place of the weaker \eqref{sNbds},
then applying Corollary \ref{greenbds} with $q=2$, $d=1$,
we obtain finally \eqref{eq:smallsest} for $2\le p\le \infty$,
by a computation similar \eqref{sest}--\eqref{sestad};
we omit the details of this final bootstrap argument.
Estimate \eqref{eq:stab} then follows using \eqref{etbds} with
$q=d=1$, by
\ba\label{sesta}
|\psi(t)|_{L^p}& \le
C E_0 +
C\zeta(t)^2\int_0^{t} (1+t-s)^{-\frac{1}{2}(1-1/p)}
(1+s)^{-\frac{1}{2}}ds
\le
C(1+t)^{\frac{1}{2p}}(E_0+\zeta(t)^2),
\ea
together with the fact that
$ \tilde u(x,t)-\bar u(x)= v(x-\psi,t)+ (\bar u(x)-\bar u(x-\psi), $
so that $|\tilde u(\cdot, t)-\bar u|$ is controlled
by the sum of $|v|$ and
$|\bar u(x)-\bar u(x-\psi)|\sim |\psi|$.
This yields stability for $|u-\bar u|_{L^1\cap H^K}|_{t=0}$
sufficiently small, as described in the final line of the theorem.
\end{proof}

\section{Nonlinear stability in dimension two}\label{s:nonlin2}
We now briefly sketch the extension to dimension $d=2$.
%
%
Given a solution $\tilde u(x,t)$ of \eqref{eqn:1conslaw},
define the nonlinear perturbation variable
\be\label{pertvar2}
v=u-\bar u= \tilde {u}(x_1+\psi(x,t),x_2,t)-\bar u(x_1),
\ee
where
\be\label{uvar2}
u(x,t):=\tilde {u}(x_1+\psi(x,t),t)
\ee
and $\psi:\RM^d\times\RM\to\RM$ is to be chosen later.

\begin{lem}\label{l:pert}
For $v$, $u$ as in \eqref{uvar2},
\begin{equation} \label{eqn:nlper}
u_t+\sum_{j=1}^df^j(u)_{x_j}-\sum_{j=1}^du_{x_jx_j}=
\left(\partial_t-L\right)\bar{u}'(x_1)\psi(x,t)
+\sum_{j=1}^d\partial_{x_j} R_j + \partial_t  S  + T,
\end{equation}
where
\[
R_j = O((|v,\psi_t,\psi_{x})| |(v,v_x,\psi_t,\psi_{x})|),\quad
S:=-v\psi_{x_1}=(|v|(|\psi_x|),\quad
T:= O(|\psi_x|^3 + |(v,\psi_x)||\psi_{xx}|).
\]
\end{lem}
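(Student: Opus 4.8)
The plan is to mirror, essentially verbatim, the one-dimensional computation of Lemma \ref{4.1}, now carried out in $d$ space variables with the shift $\psi$ acting only on the $x_1$-slot. First I would record the chain-rule identities for $u(x,t)=\tilde u(x_1+\psi(x,t),x_2,\dots,x_d,t)$: differentiating in $t$ gives $u_t=\tilde u_{x_1}\psi_t+\tilde u_t$; differentiating in $x_j$ for $j\ge 2$ gives $u_{x_j}=\tilde u_{x_j}+\tilde u_{x_1}\psi_{x_j}$; and in $x_1$ gives $u_{x_1}=\tilde u_{x_1}(1+\psi_{x_1})$, with the corresponding second-derivative expressions $u_{x_jx_j}$ obtained by one more differentiation (each producing one ``clean'' term $\tilde u_{x_jx_j}$ plus correction terms quadratic or higher in $\psi$-derivatives and linear in second derivatives of $\tilde u$). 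Substituting these into $u_t+\sum_j f^j(u)_{x_j}-\sum_j u_{x_jx_j}$ and using that $\tilde u$ solves \eqref{eqn:1conslaw}, i.e. $\tilde u_t+\sum_j df^j(\tilde u)\tilde u_{x_j}-\sum_j \tilde u_{x_jx_j}=0$, all the ``leading'' terms cancel exactly as in \eqref{altform}, leaving an expression built purely from $\psi$-derivatives multiplying derivatives of $\tilde u$.

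Next I would pair this against the analogue of the exact-solution identity $(\partial_t-L)\bar u'(x_1)\psi = \bar u_{x_1}\psi_t-\bar u_t\psi_{x_1}-(\bar u_{x_1}\psi_{x_1})_{x_1}-\sum_{j\ge 2}(\bar u_{x_1}\psi_{x_j})_{x_j}$, which follows from $L\bar u'=0$ by translation invariance and a direct computation using the form \eqref{e:lin} of $L$ in $d$ dimensions. Subtracting, and then using the differentiation identities \eqref{keyderivs}--\eqref{solvedderivs} (now with $\psi$ in the $x_1$-slot, so that $\tilde u_{x_1}-\bar u_{x_1}-v_{x_1}=-(\bar u_{x_1}+v_{x_1})\psi_{x_1}/(1+\psi_{x_1})$, $\tilde u_{x_j}-\bar u_{x_j}-v_{x_j}=-(\bar u_{x_1}+v_{x_1})\psi_{x_j}/(1+\psi_{x_1})$ for $j\ge 2$, and the $t$-derivative analogue) to eliminate $\tilde u$ in favor of $\bar u$, $v$, and $\psi$-derivatives, I would collect the remainder into $\sum_j\partial_{x_j}R_j+\partial_t S+T$. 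The identification of $S=-v\psi_{x_1}$ comes, as in one dimension, from the term $v_{x_1}\psi_t-v_t\psi_{x_1}=(v\psi_t)_{x_1}-(v\psi_{x_1})_t$; the $(v\psi_t)_{x_1}$ piece is absorbed into $R_1$, and $-(v\psi_{x_1})_t=\partial_t S$. Everything else that is in divergence form (products of $v$ or $\bar u_{x_1}$ with $\psi_t,\psi_x$, divided by the harmless factor $1+\psi_{x_1}$) goes into the $R_j$'s, and the genuinely non-divergence-form leftovers — terms of the shape $|\psi_x|^3$ and $|(v,\psi_x)||\psi_{xx}|$ arising from the second-derivative corrections to $u_{x_jx_j}$ that cannot be written as a single $\partial_{x_j}$ of something of the right order — are lumped into $T$. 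Finally I would state the size estimates $R_j=O(|(v,\psi_t,\psi_x)|\,|(v,v_x,\psi_t,\psi_x)|)$, $S=O(|v||\psi_x|)$, $T=O(|\psi_x|^3+|(v,\psi_x)||\psi_{xx}|)$, read off termwise from the explicit formulas, using that $1-|\psi_x|$ is bounded below under the smallness assumption.

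The main obstacle, and the only place where dimension $d=2$ (as opposed to $d=1$) genuinely changes the bookkeeping, is the appearance of the extra tensor $T$: in one dimension the cross-term $(\tilde u_{x_1}\psi_{x_1})_{x_1}$ was a single $\partial_{x_1}$ of something that could be folded into $R_x+(\partial_t+\partial_x^2)S$, but with transverse variables present the term $\sum_{j\ge 2}(\tilde u_{x_1}\psi_{x_j})_{x_j}$ together with the $\psi_x^2/(1+\psi_{x_1})$ corrections produces pieces that, after matching against $(\partial_t-L)\bar u'\psi$, are not all in pure divergence or pure $\partial_t$ form — hence the need for a separate zeroth-order remainder $T$. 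I would therefore take care to track exactly which correction terms from $u_{x_jx_j}$ survive the subtraction of the $L$-identity; the rest is the same elementary chain-rule manipulation as in Lemma \ref{4.1}, so I would state the result and leave the routine termwise verification to the reader, exactly as the paper does for the corresponding one-dimensional lemma.
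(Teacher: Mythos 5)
Your overall strategy --- chain rule, subtract the exact $(\partial_t - L)\bar u'\psi$ identity, use the solved-derivative relations \eqref{solvedderivs2}, then sort the quadratic remainder into divergence ($R_j$), time-derivative ($S$), and zeroth-order ($T$) pieces --- is exactly what the paper does, and your identification of $S=-v\psi_{x_1}$ via the commutator $v_{x_1}\psi_t - v_t\psi_{x_1} = (v\psi_t)_{x_1} - (v\psi_{x_1})_t$ is also the right move. However, the explicit formula you write down for the comparison identity is wrong in $d\ge 2$, and the missing term is not innocuous.

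You claim
\[
(\partial_t - L)\bar u'(x_1)\psi = \bar u_{x_1}\psi_t - \bar u_t\psi_{x_1} - (\bar u_{x_1}\psi_{x_1})_{x_1} - \sum_{j\ge 2}(\bar u_{x_1}\psi_{x_j})_{x_j}.
\]
But from $L = \Delta_x - \sum_j \partial_{x_j}(A^j\,\cdot\,)$ with $A^j = df^j(\bar u)$, a direct computation using $L\bar u' = 0$ gives (as the paper records) the additional transverse-convection contribution $\sum_{j\ne 1} df^j(\bar u)\bar u_{x_1}\psi_{x_j}$, which is \emph{linear} in $\psi_x$. On the other side of the subtraction, your chain-rule formula for $\sum_j f^j(u)_{x_j}$ correctly produces the matching linear term $\sum_{j\ne 1} df^j(\tilde u)\tilde u_{x_1}\psi_{x_j}$. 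Without the corresponding $\bar u$-term in the $(\partial_t - L)\bar u'\psi$ identity there is nothing to cancel it against, and the subtraction would leave an $O(|\psi_x|)$ residual. That cannot be absorbed into $R_j$, $S$, or $T$, all of which are quadratic or higher, so the claimed decomposition would fail. (In $d=1$ this issue is invisible because the sum $\sum_{j\ne1}$ is empty, which is probably why it slipped past.) The fix is simply to use the correct identity
\[
(\partial_t - L)\bar u'(x_1)\psi = \bar u_{x_1}\psi_t - \bar u_t\psi_{x_1} + \sum_{j\ne 1} df^j(\bar u)\bar u_{x_1}\psi_{x_j} - \sum_{j\ne 1}\bar u_{x_jx_1}\psi_{x_j} - \sum_j(\bar u_{x_1}\psi_{x_j})_{x_j};
\]
subtracting it from the chain-rule expression then produces the quadratic difference $\sum_{j\ne1}(df^j(\tilde u)\tilde u_{x_1} - df^j(\bar u)\bar u_{x_1})\psi_{x_j}$, which, after the further rewriting the paper carries out (pulling out $\partial_{x_1}$ and collecting the leftover $-(f^j(u)-f^j(\bar u))\psi_{x_jx_1}$ and cubic $\psi_{x}$ pieces), is exactly what populates $R_j$ and $T$. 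With that correction your argument goes through; the rest is as you describe.
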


\begin{proof}
Similarly as in the proof of Lemma \ref{4.1},
it follows by a straightforward computation Using the fact that
$\tilde u_t + \sum_j df^j(\tilde{u})\tilde{u}_{x_j}
-\sum_j \tilde{u}_{x_jx_j}=0$, it follows that
\ba\label{altform2}
u_t + \sum_j df^j(u){u}_{x_j} -\sum_j {u}_{x_jx_j}&=
\tilde{u}_{x_1}\psi_t
-\tilde{u}_{t}\psi_{x_1}
+ \sum_{j\ne 1}df^j(\tilde u)\tilde u_{x_1}\psi_{x_j}\\
&\quad - \sum_{j\ne 1}\tilde u_{x_jx_1}\psi_{x_j}
- \sum_{j}(\tilde u_{x_1}\psi_{x_j} )_{x_j},
\ea
where it is understood that derivatives of $\tilde u$ appearing
on the righthand side
are evaluated at $(x+\psi(x,t),t)$.
Moreover, by another direct calculation,
using the fact that $L(\bar{u}'(x_1))=0$ by translation invariance,
we have
\begin{align*}
\left(\partial_t-L\right)\bar{u}'(x_1)\psi&=\bar{u}_{x_1}\psi_t
-\bar u_{t}\psi_{x_1}
+ \sum_{j\ne 1}df^j(\bar u)\bar u_{x_1}\psi_{x_j}
 -\sum_{j\ne 1}\bar u_{x_jx_1}\psi_{x_j}
- \sum_{j}(\bar u_{x_1}\psi_{x_j} )_{x_j}.
\end{align*}
Subtracting, and using \eqref{keyderivs} and
\ba\label{keyderivs2}
\bar u_{x_j} + v_{x_j}&= \tilde u_{x_j} + \tilde u_{x_1}\psi_{x_j},\\
\bar u_t + v_t&= \tilde u_t + \tilde u_{x_1}\psi_t,\\
\ea
so that
\ba\label{solvedderivs2}
\tilde u_{x_j}-\bar u_{x_j} -v_{x_j}&=
-(\bar u_{x_1}+v_{x_1}) \frac{\psi_{x_j}}{1+\psi_{x_1}},\\
\tilde u_{t}-\bar u_{t} -v_{t}&=
-(\bar u_{x_1}+v_{x_1}) \frac{\psi_{t}}{1+\psi_{x_1}},\\
\ea
we obtain
\begin{align*}
u_t + \sum_j df^j(u){u}_{x_j} -\sum_j {u}_{x_jx_j}&=
(\partial_t-L)\bar{u}'(x_1)\psi
+v_{x_1}\psi_t - v_t \psi_{x_1}
\\ &\quad
+ \sum_{j\ne 1}(df^j(\tilde u)\tilde u_{x_1}- df^j(\bar u) \bar u_{x_1})
\psi_{x_j}
\\ &\quad
 -\sum_{j\ne 1}(\tilde u_{x_jx_1}- \bar u_{x_jx_1}) \psi_{x_j}
- \sum_{j}( (\tilde u_{x_1}- \bar u_{x_1}) \psi_{x_j} )_{x_j}.
\end{align*}

Using $v_{x_1}\psi_t - v_t \psi_{x_1} = (v\psi_t)_{x_1}-(v\psi_{x_1})_t$,
$$
df^j(\tilde u)\tilde u_{x_1}=
f(u)_{x_1}- df^j(\tilde u)\tilde u_{x_1} \psi_{x_1}
=
f(u)_{x_1}(1-\psi_x)- df^j(\tilde u)\tilde u_{x_1} \psi_{x_1}^2
,
$$
and
$ \tilde u_{x_jx_1}
= (\tilde u_{x_j})_{x_1} - \tilde u_{x_jx_1}\psi_{x_1}
= (\tilde u_{x_j})_{x_1}(1 - \psi_{x_1} )
+ \tilde u_{x_jx_1}\psi_{x_1}^2, $
and rearranging, we obtain
\begin{align*}
u_t + \sum_j df^j(u){u}_{x_j} -\sum_j {u}_{x_jx_j}&=
(\partial_t-L)\bar{u}'(x_1)\psi
+(v\psi_t)_{x_1} - (v \psi_{x_1})_t
\\ &\quad
+ \sum_{j\ne 1}(f^j(u)- f^j(\bar u))_{x_1}) \psi_{x_j}\\
&\quad
- \sum_{j\ne 1} f(u)_{x_1}\psi_{x_1} \psi_{x_j}
-\sum_{j\ne 1} df^j(\tilde u)\tilde u_{x_1} \psi_{x_1}^2
\psi_{x_j}\\
&\quad
 -\sum_{j\ne 1} (\tilde u_{x_j}- \bar u_{x_j})_{x_1} \psi_{x_j}
 +\sum_{j\ne 1} (\tilde u_{x_j})_{x_1}\psi_{x_1}\psi_{x_j}\\
&\quad
 +\sum_{j\ne 1} \tilde u_{x_jx_1}\psi_{x_1}^2\psi_{x_j}
\\ &\quad
- \sum_{j}( v_{x_1}\psi_{x_1} )_{x_j}
- \sum_{j}
\Big( (\bar u_{x_1}+v_{x_1})
\frac{\psi_{x_j}\psi_{x_1}}{1+\psi_{x_1}}\Big)_{x_j}.
\end{align*}
Noting that
$$
(f^j(u)- f^j(\bar u))_{x_1}) \psi_{x_j}=
((f^j(u)- f^j(\bar u) \psi_{x_j})_{x_1}
-(f^j(u)- f^j(\bar u)) \psi_{x_jx_1},
$$
$$
 f(u)_{x_1}\psi_{x_1} \psi_{x_j}=
 (f(u) \psi_{x_1} \psi_{x_j})_{x_1}-
 f(u) (\psi_{x_1} \psi_{x_j})_{x_1},
$$
and
$$
(\tilde u_{x_j}- \bar u_{x_j})_{x_1} \psi_{x_j}
=
((\tilde u_{x_j}- \bar u_{x_j}) \psi_{x_j} )_{x_1}
-(\tilde u_{x_j}- \bar u_{x_j}) \psi_{x_jx_1} ,
$$
with
$ |f^j(u)- f^j(\bar u)| =O(|v|)$ and
$|\tilde u_{x_j}- \bar u_{x_j}| =O(|v|), $
we obtain the result
\end{proof}

\begin{proof}[Proof of Theorem \ref{main2}]
The result of Lemma \ref{l:pert} is the only part of
the analysis that differs essentially from that of the one-dimensional
case.  The cancellation and nonlinear damping arguments go through
exactly as before to yield the analogs of Propositions
\ref{p:cancellation} and \eqref{damping}.
Likewise, we obtain a Duhamel representation analogous to
\eqref{u}--\eqref{psidot}, forming a closed system
in variables $(v,\psi_x,\psi_t)$.

To obtain the analog of Lemma \ref{sclaim}, completing the proof
of nonlinear stability,
we can carry out a somewhat simpler argument than in
the one-dimensional case, using Corollary \ref{greenbds} with $d=2$, $q=2$
for all estimates, not only the final bootstrap argument, giving
in place of \eqref{sest} the estimate
\ba\label{sestd2}
|v(\cdot,t)|_{L^p(x)}& \le
C(1+t)^{-(1-1/p)}E_0 +
C\zeta(t)^2\int_0^{t} (1+t-s)^{-(1/2-1/p)}(t-s)^{-\frac{1}{2}}
(1+s)^{-1}ds\\ &
\le
 C(E_0+\zeta(t)^2) (1+t)^{-(1-1/p)},
\ea
\ba\label{psiestd2}
|(\psi_x,\psi_t)(\cdot,t)|_{L^p(x)}& \le
C(1+t)^{-(1-1/p)-\frac{1}{2}}E_0 \\
&\quad +
C\zeta(t)^2\int_0^{t} (1+t-s)^{-(1/2-1/p)}(t-s)^{-\frac{1}{2}}
(1+s)^{-1}ds\\ &
\le
 C(E_0+\zeta(t)^2) (1+t)^{\eps -(1-1/p)-\frac{1}{2}}
\ea
for divergence-form source terms, and
\ba\label{sestd2nondiv}
|v(\cdot,t)|_{L^p(x)}& \le
C\zeta(t)^2\int_0^{t} (1+t-s)^{-(1/2-1/p)}
(1+s)^{-\frac{3}{2}}ds\\
&
\le
 C(E_0+\zeta(t)^2) (1+t)^{-(1-1/p)},
\ea
\ba\label{psiestd2n}
|(\psi_x,\psi_t)(\cdot,t)|_{L^p(x)}& \le
C(1+t)^{-(1-1/p)-\frac{1}{2}}E_0 \\
&\quad +
C\zeta(t)^2\int_0^{t} (1+t-s)^{-(1/2-1/p)}(t-s)^{-\frac{1}{2}}
(1+s)^{-\frac{3}{2}}ds\\ &
\le
 C(E_0+\zeta(t)^2) (1+t)^{\eps -(1-1/p)-\frac{1}{2}}
\ea
for faster-decaying nondivergence-form source terms.

We omit the details, which are entirely similar to, but
substantially simpler than, those of the one-dimensional case.
\end{proof}

%
%


\end{document}